\title{Frostman and Fourier characterisations of fractal dimensions}
\author{Kenneth J. Falconer and Shuqin Zhang}
\date{}
\def\bbbr{\mathbb{R}}
\newcommand{\D}{\mathcal{D}}
\newcommand{\e}{\epsilon}
\newcommand{\Nearrow}{\rotatebox[origin=c]{20}{\(\leq\)}}
\newcommand{\Searrow}{\rotatebox[origin=c]{340}{\(\leq\)}}
\DeclareMathOperator{\spt}{spt}
\newcommand\reallywidehat[1]{%
\savestack{\tmpbox}{\stretchto{%
  \scaleto{%
    \scalerel*[\widthof{\ensuremath{#1}}]{\kern.1pt\mathchar"0362\kern.1pt}%
    {\rule{0ex}{\textheight}}
  }{\textheight}%
}{2.4ex}}%
\stackon[-6.9pt]{#1}{\tmpbox}%
}
\newcommand\ucod{\overline{\mbox{\rm dim}}_{\rm C}} 
\newcommand\lcod{\underline{\mbox{\rm dim}}_{\rm C}} 
\newcommand\ubd{\overline{\mbox{\rm dim}}_{\rm B}} 
\newcommand\lbd{\underline{\mbox{\rm dim}}_{\rm B}} 
\newcommand\bdd{\mbox{\rm dim}_{\rm B}}
\newcommand\pkd{\mbox{\rm dim}_{\rm P}} 
\newcommand\hdd{\mbox{\rm dim}_{\rm H}} 
\newcommand\mlbd{\underline{\mbox{\rm dim}}_{\rm MB}} 
\newcommand\mubd{\overline{\mbox{\rm dim}}_{\rm MB}} 
\newcommand{\be}{\begin{equation}} 
\newcommand{\ee}{\end{equation}} 
\newcommand{\R}{\mathbb{R}}
 \newtheorem{theo}{Theorem}[section]
 \newtheorem{cor}[theo]{Corollary}
 \newtheorem{lem}[theo]{Lemma}
 \newtheorem{prop}[theo]{Proposition}
\begin{document}
\maketitle

\begin{abstract}
We examine Frostman-type characterisations and other extremal measure criteria for a range of fractal dimensions of sets. In particular we derive properties of the less familiar modified lower box dimension and upper correlation dimension. We also express a number of fractal dimensions in terms of Fourier properties of measures.
\end{abstract}

\section{Introduction}
\setcounter{equation}{0}
\setcounter{theo}{0}

\subsection{Overview}
Starting with the introduction of Hausdorff measure and dimension in 1919, many ways of assigning a dimension to fractal sets have been introduced. Almost all of these fractal dimensions are intimately related to measures, in particular the dimension of a set $E \subset \mathbb{R}^d$  can be characterised as the extreme value of some expression  taken over probability measures supported by $E$.

The prototype for this goes back to Frostman's work in the 1930s \cite{Fro}. He showed that there exists a Borel probability measure $\mu$ supported by a closed (or more generally Borel) set $E$ such that 
\begin{equation}\label{frost}
\mu(B(x,r))\leq cr^s \quad \text{ for all } x \in \R^d \text{ and } r>0
\end{equation}
for some constant $c>0$ if and only if 
$\mathcal{H}^s(E)>0$, where $\mathcal{H}^s$  is $s$-dimensional Hausdorff measure.
In particular this leads easily to a characterisation of Hausdorff dimension $\hdd$:
\begin{equation}\label{frostdim}
\hdd E 
	=  \sup_{\mu\in {\mathcal P}(E)} \liminf_{r\to 0} \inf_{x\in E} \frac{\log  \mu(B(x,r))}{ \log r},\end{equation}
	where  ${\mathcal P}(E) $ denotes the set of Borel probability measures supported by $E$. 

The Frostman inequality \eqref{frost} is closely related to an integral form:
\begin{equation}\label{intballs}
 \int \mu(B(x,r))\mathrm{d}\mu(x) \leq cr^s  \text{ for all }  r>0.
\end{equation}
(Note that the integral here equals $(\mu\times\mu)\{(x,y) \in E \times E :|x-y|\leq r\}$.)
Clearly \eqref{frost} immediately implies \eqref{intballs} for all $r$, but also it is easily shown that \eqref{intballs} implies a Frostman inequality for a restriction of the measure $\mu$. 

There are other well-known characterisations of Hausdorff dimension by integral forms, see Section \ref{lcdhd}, as well as Fourier transformed versions of some of these criteria in Section \ref{FourierSec}.

A benefit of having a range of measure characterisations of the dimension of a set is that different forms can be appropriate for different applications.  In this paper we present measure representations for a number of fractal dimensions, some are known and are included for context and overview. Indeed, Cutler \cite{Cu} gives a thorough treatment for Hausdorff dimension $\hdd$ and packing dimension $\pkd$. Here we are specially interested in the rather less studied modified lower box dimension $\mlbd$ and upper correlation dimension $\ucod$. In particular we obtain a Frostman-type expression for $\mlbd$ and we show that $\mlbd$, $\ucod$ and $\pkd$ are essentially different, providing a counter-example to a conjecture of Fraser. In the final section we establish an equivalence that enables many of the criteria to be expressed in terms of Fourier transforms.

\subsection{Notation and conventions}
Throughout this paper we will work with subsets of $d$-dimensional Euclidean space $\R^d$, though much extends to more general metric spaces. We write $B(x,r)$ for the closed ball of centre $x$ and radius $r$.  We will assume that all sets and measures mentioned are Borel though this will not always be stated explicitly.  For a set $E$ an overbar $\overline{E}$ denotes the closure, ${\rm int}E$ the interior, and $\partial E$ the boundary. We write $f\asymp g$ to mean that there are constants $c_1,c_2 >0$ such that 
$c_1 f(r) \leq g(r) \leq c_2 f(r)$ over an appropriate range of $r$.

We define a dyadic cube in $\R^d$ with side $2^{-n} (n\geq0)$  as the $d$-fold product of half-open intervals $(j_12^{-n},(j_1+1)2^{-n}]\times\cdots\times(j_d2^{-n},(j_d+1)2^{-n}]$, where $j_k\in\mathbb{Z}$ for  $k=1,\cdots,d$.
	We will frequently refer to a dyadic cube of side $2^{-n}$   as an $n$th-level cube  and denote such a cube by $C_n$. For every $n\geq0$, each $x\in\R^d$ is contained in exactly one $n$th cube, which we denote  by $C_n(x)$. In much of what follows, results involving expressions such as ${\log  \mu(B(x,r))}/{ \log r}$ are equally valid where this is replaced by the dyadic form ${\log  \mu(C_n)}/{ -n\log 2}$, using that balls are contained in cubes of comparable radii and vice-versa.  However, working with  dyadic versions is often easier because of the `nested' property of dyadic cubes, that any two cubes are either disjoint or one contains the other.

\section{Frostman-type conditions for dimensions}\label{sec2}
\setcounter{equation}{0}
\setcounter{theo}{0}
In this section we consider how various definitions of fractal dimensions of a set $E$ may be characterised as maxima of expressions involving  measures supported by $E$.

\subsection{Correlation dimensions}

	We  first consider correlation dimensions of measures before transferring the definition to sets. 
	Let $\mu$ be a Borel probability  measure on $\R^d$.
	The {\it lower correlation dimension}  of $\mu$ is defined by 
	\begin{equation}\label{lcdm}
		\lcod \mu=\liminf_{r\rightarrow0}\dfrac{\log\int\mu(B(x,r))\,\mathrm{d} \mu(x)}{\log r}, 
	\end{equation}
and similarly the  {\it upper correlation dimension}  of $\mu$ is given by 
	\begin{equation}\label{ucdm}
		\ucod \mu=\limsup_{r\rightarrow0}\dfrac{\log\int\mu(B(x,r))\,\mathrm{d} \mu(x)}{\log r}. 
	\end{equation}	
If the limit exists, that is $\lcod \mu=\ucod \mu$, we term the common value the {\it correlation dimension} of $\mu$; this is the case for many common measures including self-similar and self-conformal measures. Note that correlation dimension has many alternative names across the literature including information dimension, $L^2$-dimension and energy dimension.
Correlation dimensions of measures have been intensively studied, not least as a special case of $L^q$-dimensions in multifractal analysis and also in information theory. The paper by Mattila, Moran and Rey \cite{MM} presents a number of properties of correlation dimensions of measures that show they are not particularly well-behaved. Projection theorems for  lower correlation dimensions of measures are  given in \cite{FO} and \cite{HK} and for upper correlation dimensions   in \cite{FO}, along with results for the more general $L^q$-dimensions. See also \cite{Cu, FFK}.
	
The correlation dimensions of sets are now defined in terms of those of measures in the natural way.
For a Borel set $E \subset \R^d$ the {\it lower correlation dimension} is given by
\begin{equation}\label{lcds}
	\lcod E=\sup_{\mu\in\mathcal{P}(E)}\lcod \mu,
\end{equation}
and the {\it upper correlation dimension} by
\begin{equation}\label{ucds}
	\ucod E=\sup_{\mu\in\mathcal{P}(E)}\ucod \mu,
\end{equation}
and again we refer to the {\it correlation dimension} of $E$ if these are equal.

\subsubsection{Lower correlation dimension -- Hausdorff dimension}\label{lcdhd}

The lower correlation dimension of a set equals its Hausdorff dimension.
For the definition of Hausdorff dimension first recall that a $\delta$-{\it cover } of $E\subset\R^d$ is a finite or countable collection of sets $\{U_i\}_i$, each with diameter at most $\delta$, that covers $E$. That is, $E \subset \bigcup_i U_i$ and $0<|U_i|\leq \delta$, where $|\ |$ denotes diameter.  For each $s\geq0$ and $\delta>0$, let
\begin{equation*}
	\mathcal{H}_\delta^s(E)=\inf\Big\{ \sum_i |U_i|^s:\{U_i\}_i \text{ is a } \delta\text{-cover of }E\Big\}.
\end{equation*}
Since $\mathcal{H}_\delta^s(E)$ increases as $\delta\rightarrow0$, the limit 
\begin{equation*}
	\mathcal{H}^s(E)=\lim_{\delta\rightarrow0}\mathcal{H}_\delta^s(E)
\end{equation*}
exists, possibly infinite. We term $\mathcal{H}^s(E)$ the {\it $s$-dimensional Hausdorff measure} of $E$. Then $\mathcal{H}^s$ is a Borel measure on $\R^d$.
The {\it Hausdorff dimension} of $E$ is 
\begin{equation*}
	\hdd E=\inf\left\lbrace s\geq0:\mathcal{H}^s(E)=0\right\rbrace=\sup\left\lbrace s:\mathcal{H}^s(E)=\infty\right\rbrace.
\end{equation*}

We have already noted Frostman's characterisation of Hausdorff dimension in \eqref{frost} and \eqref{frostdim}. To see that Hausdorff dimension equals lower correlation dimension, 
first note that if $s$ is such that \eqref{frost} is satisfied then integrating gives $\int\mu(B(x,r))\,\mathrm{d} \mu(x) \leq c r^s$, from which if follows that $\hdd E  \leq \lcod E$. On the other hand, 
 if  there is a measure $\mu\in \mathcal{P}(E)$ such that $\lcod \mu >s$ then there exists $c>0$ such that $\int\mu(B(x,r))\,\mathrm{d} \mu(x) \leq c r^s$ for $0<r\leq 1$, so for all $\epsilon >0$ 
\begin{equation}\label{infsum}
\int \Big(\sum_{k=1}^\infty 2^{-k(\epsilon-s)}\mu\big(B(x,2^{-k})\big)\Big)\mathrm{d}\mu(x) \leq c\sum_{k=1}^\infty 2^{-k\epsilon}  <\infty.
\end{equation}
It follows that  there is a set $E' \subset E$ with $\mu (E')>0$ such that $\mu(B(x,2^{-k}))\leq c'2^{-k(s-\epsilon)}$ for all $x \in E'$.
 
This  inequality extends from dyadic to continuous values of $r$ to give a Frostman inequality, so \eqref{frostdim} and \eqref{lcds}  imply that $\hdd E \geq s-\epsilon$ and 
\begin{equation}\label{haucor}
 \hdd E = \lcod E.
 \end{equation}

Closely related is the energy characterisation of Hausdorff dimension by which the bounds at all scales can be combined into a single expression. 
The {\it $s$-energy } of a measure $\mu$ is given by 
\begin{equation}\label{energy}
 	\mathcal{E}_s(\mu)=\iint\frac{\mathrm{d}\mu(x)\mathrm{d}\mu(y)}{|x-y|^{s}}
 \end{equation}
Then
$$ \hdd E = \lcod E = \sup\Big\{ s\geq0: \exists\  \mu\in \mathcal{P}(E)\text{ s.t. }\mathcal{E}_s(\mu)<\infty\Big\},$$ 
see, for example, \cite{FaBk,Mat}.
	In fact,  if $ \mathcal{E}_s(\mu)<\infty$ for some $s>0$, it follows from 
	\begin{equation*}
		\mathcal{E}_s(\mu)\geq\iint_{|x-y|\leq r}\frac{1}{|x-y|^{s}}\mathrm{d}\mu(y)\mathrm{d}\mu(x)\geq \frac{\int\mu(B(x,r))\mathrm{d}\mu(x)}{r^s}
	\end{equation*}
	that $\lcod E\geq s$. On the other hand, if $t$ is such that \eqref{frost} is satisfied, then for any $0<s<t$,
	\begin{equation*}
		\mathcal{E}_s(\mu)=\int\int_0^\infty  sr^{-s-1}\mu(B(x,r))\mathrm{d}r\mathrm{d}\mu(x)
		\leq cs\int_0^\infty  r^{t-s-1}\mathrm{d}r<\infty.
	\end{equation*}

\subsubsection{Upper correlation dimension}
The upper correlation dimensions of measures \eqref{ucdm} and sets \eqref{ucds} are less studied than their lower counterparts. There are various useful equivalent definitions of upper correlation dimension.
\begin{prop}\label{cordim}
Let $E \subset \mathbb{R}^d$ be a Borel set. Then
\begin{align}
&\ucod E\nonumber\\
&= \inf\Big\{s \geq 0:  \forall\ \mu \in {\mathcal P}(E), \text{ for all sufficiently small }r, \int \mu(B(x,r))\mathrm{d}\mu(x) \geq r^{s } \Big\} \label{cd1}\\
&= \sup\Big\{s \geq 0: \exists\ \mu \in {\mathcal P}(E)  \text{ and $\{r_k\}_k\searrow 0 $ s.t.  } \int \mu(B(x,r_k))\mathrm{d}\mu(x) \leq r_k^{s } \Big\} \label{cd2} \\
&= \sup\Big\{s \geq 0: \exists\   \mu \in {\mathcal P}(E)  \text{ and $\{r_k\}_k\searrow 0 $ s.t.  } \mu(B(x,r_k))\leq r_k^{s } \text{ for all } x \in \mathbb{R}^d\Big\}\label{cd3}\\
&= \sup\Big\{s \geq 0: \exists\   \mu \in {\mathcal P}(E)  \text{ and $\{n_k\}_k\nearrow\infty $ s.t.  } \mu(C_{n_k})\leq 2^{-n_ks}  \text{ for all } C_{n_k}  \in \D_{n_k}\Big\}\label{cd4}.
\end{align}
\end{prop}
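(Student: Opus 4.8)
The plan is to show the four expressions agree by running a short cycle of (in)equalities, passing freely between balls and dyadic cubes as licensed in the introduction. I would first treat \eqref{cd2}, which merely restates the definition: writing $I_\mu(r)=\int\mu(B(x,r))\,\mathrm{d}\mu(x)$, the inequality $I_\mu(r_k)\le r_k^{s}$ is, after dividing $\log I_\mu(r_k)\le s\log r_k$ by the negative number $\log r_k$, the same as $\log I_\mu(r_k)/\log r_k\ge s$. Hence $\ucod\mu=\sup\{s:\exists\, r_k\searrow 0,\ I_\mu(r_k)\le r_k^{s}\}$, and taking the supremum over $\mu\in\mathcal{P}(E)$ identifies $\ucod E$ with the supremum in \eqref{cd2}. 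For \eqref{cd1} I would note that its defining set of $s$ is upward closed (since $r^{s'}\le r^{s}$ when $r<1$), with complement exactly $\{s:\exists\,\mu\in\mathcal P(E),\ \exists\, r_k\searrow 0,\ I_\mu(r_k)<r_k^{s}\}$; this complement is downward closed with the same boundary value as the set appearing in \eqref{cd2}, so the infimum in \eqref{cd1} coincides with the supremum in \eqref{cd2}.

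To work dyadically I would use the identity $\sum_{C\in\D_n}\mu(C)^2=\int\mu(C_n(x))\,\mathrm{d}\mu(x)$ together with the inclusions $C_n(x)\subset B(x,\sqrt{d}\,2^{-n})$ and $B(x,2^{-n})\subset\bigcup\{C':C'\text{ meets }C_n(x)\}$ to obtain $I_\mu(r)\asymp\sum_{C\in\D_n}\mu(C)^2$ for $r\asymp 2^{-n}$; the bounded multiplicative constants vanish after taking $\limsup \log(\cdot)/\log r$, so the supremum in \eqref{cd2} may be computed with $\sum_{C\in\D_n}\mu(C)^2$ replacing the integral. The same geometric comparison, applied to $\max_C\mu(C)$ instead of the average, gives the equality of \eqref{cd3} and \eqref{cd4}. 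The easy half of the remaining work, that \eqref{cd4} does not exceed \eqref{cd2}, is then immediate from $\sum_{C\in\D_n}\mu(C)^2\le\big(\max_C\mu(C)\big)\sum_C\mu(C)=\max_C\mu(C)$: any $\mu$ and $\{n_k\}$ witnessing \eqref{cd4} at exponent $s$ also witness the dyadic form of \eqref{cd2} at $s$.

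The substance lies in the reverse inequality, namely that \eqref{cd2} does not exceed \eqref{cd4}, which converts an averaged bound into a uniform Frostman bound. Fixing $s'$ below the value in \eqref{cd2} and choosing $s'<t<s$ with $\mu$ and levels satisfying $\sum_{C\in\D_n}\mu(C)^2\le 2^{-ns}$ along a sequence, a Chebyshev-type estimate shows that at each such level the heavy cubes $\{C:\mu(C)>2^{-nt}\}$ carry total mass at most $2^{nt}\sum_C\mu(C)^2\le 2^{-n(s-t)}$; discarding them and renormalising yields a measure with $\max_C\mu(C)\le 2\cdot 2^{-nt}$ at that level. The obstacle is that this good set depends on the scale, while \eqref{cd4} requires a single measure obeying the bound at infinitely many scales. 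I would resolve this by iterating the restriction along a rapidly increasing subsequence $n_1<n_2<\cdots$ chosen so that $\sum_j 2^{-n_j(s-t)}$ is arbitrarily small: restricting $\mu$ successively to the light cubes at levels $n_1,n_2,\dots$ removes only a summable, negligible amount of mass, so the decreasing sets have intersection of positive $\mu$-measure and the normalised restriction to it is a measure $\nu\in\mathcal P(E)$. Crucially, the restrictions carried out after stage $j$ delete only a tiny fraction of the mass and hence inflate each level-$n_j$ cube mass by a factor tending to $1$, so $\max_{C\in\D_{n_j}}\nu(C)\le 2^{-n_j t}\cdot O(1)\le 2^{-n_j s'}$ for all large $j$; thus $\nu$ and $\{n_j\}$ witness \eqref{cd4} at exponent $s'$, and letting $s'$ increase to the value in \eqref{cd2} closes the cycle. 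I expect this construction of one limiting measure that keeps the uniform bound at all the chosen scales to be the main difficulty, the saving feature being the geometric decay of the discarded mass that holds every renormalisation constant bounded.
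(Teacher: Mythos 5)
Your argument is correct and takes essentially the same route as the paper: the one substantive step, converting the averaged bound in \eqref{cd2} into the uniform bound of \eqref{cd3}--\eqref{cd4}, is done in the paper by passing to a geometrically decreasing subsequence, integrating the series $\sum_k r_k^{\epsilon-s}\mu(B(x,r_k))$ and restricting $\mu$ to the positive-measure set where the sum is bounded, which is precisely your level-by-level Chebyshev bound on the heavy cubes combined with a sparse subsequence making the discarded mass summable. The remaining identifications (definition for \eqref{cd2}, complementation for \eqref{cd1}, ball--cube comparison for \eqref{cd3} versus \eqref{cd4}, and Cauchy--Schwarz for the easy direction) are treated as immediate in the paper, just as in your write-up.
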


\begin{proof}
Identity \eqref{cd1} follows directly from the definitions \eqref{ucdm} and \eqref{ucds}. The equivalence between \eqref{cd1} and \eqref{cd2}, as well as that between \eqref{cd3} and \eqref{cd4}, is clear, and \eqref{cd3} clearly implies \eqref{cd2}. To see that \eqref{cd2} implies \eqref{cd3}, assume without loss of generality that  $r_k$ decreases at least geometrically, 
and integrate the sum of terms $r_k^{\epsilon - s }\mu(B(x,r_k))$ as was done for \eqref{infsum}.
 \end{proof}

We remark that, according to \eqref{cd3}, it is easy to obtain 
\begin{equation*}
	\displaystyle{\ucod } E=\sup_{\mu\in {\mathcal P}(E) }\limsup_{r\rightarrow0}\inf_{x\in E}\frac{\log\mu(B(x,r))}{\log r}.
\end{equation*}
Moreover, it is evident that $\ucod\mu\geq\dim_{\rm F}\mu$, where the {\it Frostman dimension} of $\mu$ is defined as 
\begin{align}
	\dim_F \mu= \sup &\big\{s  \geq 0: \text{{\rm   there exists a   constant $c>0$  such that }} \mu(B(x,r))\leq cr^{s } \text{\rm \  for all}\notag\\
&\qquad \text{{\rm  $x \in \mathbb{R}^d$ and $0<r<1$\big\}}}\notag\\
&=\liminf_{r\to 0} \inf_{x\in {\R^d}} \frac{\log  \mu(B(x,r))}{ \log r}.
\end{align}

There is no general relationship between the upper correlation dimension  $\displaystyle{\ucod }\mu $ of $\mu$ and the lower Minkowski dimension $\underline{\dim}_{\rm M}\mu$ of $\mu$ defined in \cite{FFK} as
\begin{equation*}
	\underline{\dim}_{\rm M}\mu:=\liminf_{r\to 0} \sup_{x\in {\R^d}}\frac{\log  \mu(B(x,r))}{ \log r}.
\end{equation*}

In Section \ref{sec3} we consider the relationship between $\ucod E$ and other dimensions.

\subsection{Box-counting dimensions}

For $r>0$ and $E\subset \mathbb{R}^d$ bounded and non-empty we write $N_r(E)$ for the least number of sets of diameter $r$ that can cover $E$. 
The {\it lower} and {\it upper box-counting dimensions} or {\it Minkowski dimensions} of   $E $ are given by 
\begin{equation}\label{bdimdef}
\lbd E\ =\ \liminf_{r\to 0} \frac{\log  N_r(E)}{-\log r} \ \text{ and }\ 
\ubd E\ =\ \limsup_{r\to 0} \frac{\log  N_r(E)}{-\log r} .
\end{equation} 
Note that there are various ways of defining $N_r$ that give equivalent values, so $N_r(E)\asymp N'_r(E)$, with the implied constants depending only on $d$,  leading to the same values of the dimensions. In particular, we can take $  N_r(E)$ to be the greatest number of points in an $r$-separated subset of $E$, or alternatively the number of cubes  in the lattice of half-open cubes of side length $r/\sqrt{d}$ (so diameter $r$) which intersect $E$, see \cite{FaBk}. 

In this section, we will review several ways of representing the box dimensions of a set $E$ in terms of measures supported by $E$. The following simple lemma is key in relating covering numbers to integrals.

\begin{lem}\label{boxcount}
Let $E\subset \mathbb{R}^d$ be bounded and non-empty. Then 
\begin{equation}\label{nre}
N_r(E)^{-1} \asymp \inf_{\mu \in  {\mathcal P}(E)} \int \mu(B(x,r))\mathrm{d}\mu(x) = \inf_{\mu \in  {\mathcal P}(E)}(\mu\times\mu)\{(x,y) \in E \times E :|x-y|\leq r\},
\end{equation} 
where the implied constants depend only on $d$ and the particular definition of $N_r$ in use.
\end{lem}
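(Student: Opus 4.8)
The displayed equality is just Fubini's theorem, since $\mu(B(x,r))=\int \1_{\{|x-y|\le r\}}\,\mathrm{d}\mu(y)$, so I would dispose of it at once and concentrate on the comparison $N_r(E)^{-1}\asymp \inf_{\mu}\int \mu(B(x,r))\,\mathrm{d}\mu(x)$, which splits into an upper and a lower bound on the infimum.

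For the upper bound $\inf_{\mu}\int \mu(B(x,r))\,\mathrm{d}\mu(x)\le c_2\,N_r(E)^{-1}$ I would exhibit a single good measure. Take a maximal $r$-separated set $\{x_1,\dots,x_m\}\subset E$ (pairwise distances strictly greater than $r$) and let $\mu=\frac1m\sum_{i=1}^m \delta_{x_i}$. By maximality the closed balls $B(x_i,r)$ cover $E$, so $m\ge N_{2r}(E)\ge c\,N_r(E)$, the last step holding because a subset of $\mathbb{R}^d$ of diameter $2r$ is covered by a number of sets of diameter $r$ bounded in terms of $d$ alone. On the other hand, since the points are more than $r$ apart, $B(x_i,r)$ contains no $x_j$ with $j\ne i$, so $\int \mu(B(x,r))\,\mathrm{d}\mu(x)=m^{-2}\sum_{i,j}\1_{\{|x_i-x_j|\le r\}}=1/m$, which is $\le c_2\,N_r(E)^{-1}$.

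For the lower bound I would instead work with the $N$ lattice cubes $Q_1,\dots,Q_N$ of side $r/\sqrt d$ (hence diameter $r$) that meet $E$, with $N\asymp N_r(E)$; these are disjoint and cover $E$, so writing $p_i=\mu(Q_i)$ we have $\sum_i p_i=1$ for every $\mu\in\mathcal{P}(E)$. The key observation is that if $x\in Q_i$ then $Q_i\subset B(x,r)$, since any two points of a diameter-$r$ set are within distance $r$; hence $\mu(B(x,r))\ge p_i$ on $Q_i$. Integrating cube by cube gives $\int \mu(B(x,r))\,\mathrm{d}\mu(x)\ge \sum_i p_i^2$, and Cauchy--Schwarz yields $\sum_i p_i^2\ge (\sum_i p_i)^2/N=1/N$. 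As this holds for every $\mu$, it bounds the infimum below by $c_1\,N_r(E)^{-1}$, completing the $\asymp$.

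I do not expect a genuine obstacle here; the care required is purely in the bookkeeping. One must keep the closed/open ball and the $>r$ versus $\ge r$ conventions consistent, so that the off-diagonal terms in the upper-bound computation genuinely vanish and the containment $Q_i\subset B(x,r)$ genuinely holds, and one must absorb into $d$-dependent constants the passages between maximal separated sets, lattice-cube counts, and whichever definition of $N_r$ is in force, using that the standard covering and packing quantities at scales $r$ and $2r$ are all comparable with constants depending only on $d$. This is precisely the latitude the statement grants in allowing the implied constants to depend on $d$ and on the particular definition of $N_r$.
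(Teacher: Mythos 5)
Your proposal is correct and follows essentially the same route as the paper: the upper bound via uniform point masses on a maximal $r$-separated set (so the integral collapses to $1/m$), and the lower bound via the lattice cubes of diameter $r$ together with Cauchy--Schwarz, using that each cube $Q_i$ is contained in $B(x,r)$ for $x\in Q_i$. The only difference is that you spell out the $d$-dependent comparisons between the separated-set and cube-counting definitions of $N_r$, which the paper absorbs silently into its convention that these definitions are equivalent up to constants.
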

\begin{proof}
The infimum is attained by putting point masses of weight $1/N_r(E)$ on each of the points of an $r$-separated subset of $E$ containing $N_r(E)$ points. 

On the other hand, taking $N_r(E)$ to be the number of half-open cubes $\{B_i\}_{i=1}^{N_r(E)}$ in the lattice of cubes of diameter $r$ which intersect $E$, for all $\mu \in  {\mathcal P}(E)$,
$$1 =\bigg(\sum_{i=1}^{N_r(E)} \mu  (B_i)\bigg)^2 \leq \bigg(\sum_{i=1}^{N_r(E)} 1^2\bigg) \bigg(\sum_{i=1}^{N_r(E)}\mu  (B_i)^2\bigg) \leq N_r(E) \int \mu\big(B(x,r)\big)\mathrm{d}\mu(x)$$                                                                                                                                                                                                                                                                                                                                                                                                                                                                                                                                                                               
using the Cauchy-Schwarz inequality.
\end{proof}

\begin{prop}\label{boxprop}
Let $E \subset \mathbb{R}^d$ be bounded. Then
\begin{align}
\lbd E &=\ \liminf_{r \rightarrow 0}\frac{\log\Big(\inf_{\mu \in \mathcal{P}(E)}\int 
\mu(B(x,r))\mathrm{d}\mu(x)\Big)}{ \log r}\label{intdef1}\\
\mbox{and} \quad 
\ubd E &=\ \limsup_{r \rightarrow 0}\frac{\log\Big(\inf_{\mu \in \mathcal{P}(E)}\int 
\mu(B(x,r))\mathrm{d}\mu(x)\Big)}{ \log r}.\label{intdef}
\end{align}
\end{prop}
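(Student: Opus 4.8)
The plan is to read off both identities directly from Lemma \ref{boxcount} by taking logarithms of the two-sided bound it provides and dividing by $\log r$. By Lemma \ref{boxcount} there are constants $0<c_1\leq c_2$, depending only on $d$ and on the particular definition of $N_r$ in use, such that
\[
c_1\, N_r(E)^{-1}\ \leq\ \inf_{\mu \in \mathcal{P}(E)}\int \mu(B(x,r))\mathrm{d}\mu(x)\ \leq\ c_2\, N_r(E)^{-1}
\]
for all sufficiently small $r>0$. Taking logarithms throughout turns the multiplicative constants into additive ones and gives
\[
\log c_1-\log N_r(E)\ \leq\ \log\Big(\inf_{\mu \in \mathcal{P}(E)}\int \mu(B(x,r))\mathrm{d}\mu(x)\Big)\ \leq\ \log c_2-\log N_r(E).
\]

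Next I would divide through by $\log r$. For $0<r<1$ we have $\log r<0$, so the inequalities reverse; rewriting everything in terms of $-\log r>0$ yields
\[
\frac{\log N_r(E)}{-\log r}-\frac{\log c_2}{-\log r}\ \leq\ \frac{\log\big(\inf_{\mu}\int \mu(B(x,r))\mathrm{d}\mu(x)\big)}{\log r}\ \leq\ \frac{\log N_r(E)}{-\log r}-\frac{\log c_1}{-\log r}.
\]
Since $c_1$ and $c_2$ are fixed positive constants while $-\log r\to+\infty$ as $r\to0$, the two correction terms $\log c_i/(-\log r)$ tend to $0$. Hence the central quantity differs from $\log N_r(E)/(-\log r)$ by an amount that vanishes as $r\to0$.

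Finally, taking the $\liminf$ as $r\to0$ of all three expressions and comparing with the definition \eqref{bdimdef} of $\lbd E$ gives \eqref{intdef1}, while taking the $\limsup$ instead gives \eqref{intdef}. The only points needing care are the reversal of inequalities caused by the sign of $\log r$, and the observation that the multiplicative $\asymp$ constants of Lemma \ref{boxcount} become additive terms of order $O(1/\log r)$ after taking logarithms and therefore disappear in the limit. Since all the substantive content has already been packaged into Lemma \ref{boxcount}, I do not expect any genuine obstacle here; the argument is routine once the bound of that lemma is in hand.
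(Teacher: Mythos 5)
Your argument is correct and is exactly the route the paper takes: the paper's proof of Proposition \ref{boxprop} consists of the single remark that the identities ``follow immediately from \eqref{bdimdef} and \eqref{nre}'', and your proposal simply spells out the routine details (taking logarithms of the two-sided bound of Lemma \ref{boxcount}, dividing by $\log r$ with due attention to its sign, and observing that the $\asymp$ constants contribute only $O(1/\log r)$). No gaps; this matches the intended proof.
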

\begin{proof}
These identities follow immediately from \eqref{bdimdef} and \eqref{nre}. 
\end{proof}
It is not difficult to see that $N_r(E)^{-1} \asymp\inf_{\mu\in\mathcal{P}(E)}\sup_{x\in E}\mu(B(x,r))$. Consequently, we have 
\begin{align}
	\lbd E &=\ \liminf_{r \rightarrow 0}\sup_{\mu \in \mathcal{P}(E)}\inf_{x\in E}\frac{\log  \mu(B(x,r))}{ \log r}\label{lboxfrost}\\
	\mbox{and} \quad 
	\ubd E &=\ \limsup_{r \rightarrow 0}\sup_{\mu \in \mathcal{P}(E)}\inf_{x\in E}\frac{\log  \mu(B(x,r))}{ \log r}.\label{uboxfrost}
\end{align}

The drawback of \eqref{intdef1}-\eqref{uboxfrost} is that the measures $\mu\in \mathcal{P}(E)$ that minimise (or nearly minimise) the integrals  depend on the scale $r$. 
However, there is an anti-Frostman result that avoids this.

\begin{prop}\label{boxprop2}
Let $E \subset \mathbb{R}^d$ be Borel. Then
\begin{align}
\lbd E &=\min_{\mu\in \mathcal{P}(E)}\bigg\{ \liminf_{r\to 0} \sup_{x\in E}\frac{\log  \mu(B(x,r))}{ \log r} \bigg\} \label{lboxaf}\\
\mbox{and} \quad 
\ubd E &=\min_{\mu\in \mathcal{P}(E)}\bigg\{ \limsup_{r\to 0} \sup_{x\in E}\frac{\log  \mu(B(x,r))}{ \log r} \bigg\}. \label{uboxaf}
\end{align}
\end{prop}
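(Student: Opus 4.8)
The plan is to prove both identities simultaneously. The first observation is that for $0<r<1$ we have $\log r<0$, so
\[
\sup_{x\in E}\frac{\log\mu(B(x,r))}{\log r}=\frac{\log\big(\inf_{x\in E}\mu(B(x,r))\big)}{\log r};
\]
thus both statements are really about how large the \emph{smallest} mass of an $r$-ball centred in $E$ can be made. Each identity then separates into two inequalities: a lower bound $\liminf_r\sup_x(\cdots)\ge\lbd E$ (resp.\ $\limsup_r\sup_x(\cdots)\ge\ubd E$) holding for \emph{every} $\mu\in\mathcal{P}(E)$, and the construction of a \emph{single} measure for which equality is attained. It is the second, scale-independent, part that upgrades the scale-dependent formulas \eqref{lboxfrost}--\eqref{uboxfrost} into a genuine minimum.

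For the lower bound I would fix $\mu\in\mathcal{P}(E)$ and, at each scale $r$, exhibit a point of $E$ lying in a ball of small mass. Taking a maximal $r$-separated subset $\{x_i\}_{i=1}^{M}$ of $E$, so that $M\asymp N_r(E)$, the balls $B(x_i,r/2)$ are pairwise disjoint and hence $\sum_i\mu(B(x_i,r/2))\le1$, giving $\min_i\mu(B(x_i,r/2))\le 1/M$. Since each $x_i\in E$ this yields $\inf_{x\in E}\mu(B(x,r/2))\le c\,N_r(E)^{-1}$ (cf.\ Lemma \ref{boxcount}); taking logarithms, dividing by $\log(r/2)<0$, and passing to $\liminf$ (resp.\ $\limsup$) as $r\to0$ produces the required lower bounds for every $\mu$.

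The substance of the proof, and the main obstacle, is to produce one measure, independent of $r$, that is spread out across \emph{all} scales at once. The plan is to work dyadically: letting $M_m$ be the number of level-$m$ dyadic cubes meeting $E$ (so $M_m\asymp N_{2^{-m}}(E)$), I would choose a point $x_C\in E\cap C$ in each such cube $C$, set $\nu_m=M_m^{-1}\sum_C\delta_{x_C}$, and define $\mu=\sum_{m\ge1}c_m\nu_m$ with $c_m\asymp m^{-2}$ chosen so that $\sum_m c_m=1$; then $\mu\in\mathcal{P}(E)$. The key uniform estimate is that for every $x\in E$ the cube $C_m(x)$ meets $E$ and so carries one of the chosen atoms within distance $\sqrt{d}\,2^{-m}$ of $x$, whence for $r\asymp2^{-m}$,
\[
\inf_{x\in E}\mu(B(x,r))\ \ge\ c_m\,\nu_m(C_m(x))\ =\ c_m/M_m .
\]
Feeding this into the ratio and using that $-\log c_m\asymp\log m=o(m)$, the weights contribute only $o(1)$, so that $\sup_{x\in E}(\log\mu(B(x,r)))/\log r\le (\log M_m)/(m\log2)+o(1)$, whose $\liminf$ and $\limsup$ as $r\to0$ are precisely $\lbd E$ and $\ubd E$. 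Combined with the lower bounds, the same $\mu$ attains both minima.

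The delicate point is exactly this scale-independence: a measure optimal at one scale need not be optimal at another, and the device that resolves it is the choice of summable but subexponentially decaying weights $c_m\asymp m^{-2}$, large enough that $-\log c_m=o(\log(1/r))$, so that a single convex combination of the scale-$m$ uniform measures stays near-optimal at every dyadic scale. The remaining ingredients --- the comparability $M_m\asymp N_{2^{-m}}(E)$, the passage between dyadic and continuous radii, and the identities $\liminf_m(\log M_m)/(m\log2)=\lbd E$ and $\limsup_m(\log M_m)/(m\log2)=\ubd E$ --- are routine.
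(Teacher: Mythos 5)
Your proof is correct and follows essentially the same route as the paper: the lower bound for every $\mu$ via disjoint balls centred at a maximal $r$-separated set, and the attaining measure built as a $k^{-2}$-weighted convex combination of uniform atomic measures supported on nets at scales $2^{-k}$, with the subexponential weights contributing only $o(1)$ to the exponent. The one small refinement is your observation that the single measure summing over all $k$ attains both minima simultaneously, whereas the paper's sketch suggests restricting the sum to a suitable subsequence of scales to handle \eqref{lboxaf}.
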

\begin{proof}
This is essentially \cite[Theorem 2.1]{FFK}.  For \eqref{uboxaf}, take  $N_r(E)$ to be the largest number of disjoint balls of radii $r$  with centres in $E$, say $\{B(x_i,r)\}_{i=1}^{N_r(E)}$.  If  $\mu \in  {\mathcal P}(E)$ and $\mu(B(x,r))\geq r^s$ for all $x\in E$ and $0<r<1$,  then
	$1\geq \sum_{i=1}^{N_r(E)}\mu(B(x,r))\geq N_r(E)r^s,$
	giving  $\ubd E \leq s$.
	
	Now take $N_{2^{-k}}(E)$ to be the maximal number of points of $E$ in an $2^{-k}$-separated set, say $\{x_{k,i}\}_i$, which implies that for all $x\in E$ the ball $B(x, 2^{-k+1})$ contains at least one of the $x_{k,i}$. Define $\mu \in  {\mathcal P}(E)$ by
	$$\mu = c\sum_{k\in \mathbb{N}} \frac{1}{k^2} \sum_{i=1}^{N_{2^{-k}}(E)} \frac{1}{N_{2^{-k}}(E)}\delta_{x_{k,i}},$$
	where $\delta_{x_{k,i}}$ is a point mass at $x_{k,i}$ and $c$ is a normalising constant. 
	It is easy to see that $\mu$ is supported on $E$.
	Let $t>s>\ubd E$.
	It is easily seen that $\mu(B(x,2.2^{-k}))\geq\frac{1}{ k^{2}N_{2^{-k}}(E)}\geq 2^{k(s-t)}2^{-ks}$ for all $x\in E$ and $k $ large enough, 
	which gives $\mu(B(x,r))\geq r^t $ 
	for all sufficiently small $r$. In particular this measure attains the minimum in \eqref{uboxaf}.
	
	Identity \eqref{lboxaf} may be verified in a similar way by summing over $k$ in a suitable subset of $\mathbb{N}$.
\end{proof}

It is possible to express the box-counting definitions \eqref{bdimdef} in terms of energy integrals with respect to certain kernels, see \cite{Fa}. As well as giving alternative forms of \eqref{intdef1} and \eqref{intdef} this turns out to be useful in studying the box dimensions of projections and other images of sets, but we do not discuss this further here.

\subsection{Modified box dimensions}
Whilst box dimensions have some advantages, such as often being easy to calculate or estimate, mathematically they have several shortcomings. In particular,  lower box dimension is not finitely stable (that is the dimension of  union of two sets may differ from the maximum dimension of the individual sets), and neither lower nor upper box dimension is  stable for a countable family of sets. This can be remedied by `modifying' the definitions by considering countable covers $\{E_i\}_i$ of a set $E$ and taking the infimum  value of $\max_i \{\bdd E_i\}$ over all such covers for the modified box dimension of $E$.

For upper box dimension, this procedure leads to an alternative characterisation of packing dimension. On the other hand, modified lower box dimension has been little studied.

\subsubsection{Modified upper box dimension -- packing dimension}

We recall the definition of packing dimension via packing measures. A $\delta${\it-packing} of $E\subset \R^d$ is a finite or countable family of disjoint closed balls of radii at most $\delta$  with centres in $E$. For $s\geq 0$, the {\it $s$-dimensional packing premeasure} is defined as	
\begin{equation*}
	\mathcal{P}_0^s(E)=\inf_{\delta>0}{\mathcal P}_\delta^s(E),
\end{equation*}
	where
\begin{equation*}
		\mathcal{P}_\delta^s(E)=\sup\Big\{ \sum_i |B_i|^s:\{B_i\}_i \textit{ is  a $\delta$-packing of  $E$}\Big\}.
\end{equation*}
	The {\it $s$-dimensional packing measure} is then
\begin{equation*}
		\mathcal{P}^s(E)=\inf\bigg\{ \sum_{i=1}^\infty	\mathcal{P}_0^s(E_i): E\subset \bigcup_{i=1}^\infty E_i\bigg\}.
		\end{equation*}
Then $\mathcal{P}^s$ is a Borel measure.		
The {\it  packing dimension} of $E$ is defined analogously to Hausdorff dimension as
\begin{equation*}
	\pkd E=\inf\{  s\geq0: \mathcal{P}^s(E)=0\}=\sup\{ s\geq 0: \mathcal{P}^s(E)=\infty\}.
	\end{equation*}	
Packing dimension can be  characterised in terms of upper box dimension, that is as {\it modified upper box dimension} $\mubd$:
\begin{equation}\label{mpddef}
	\pkd E=\mubd E:= \inf\bigg\{{\sup_i }  \ \ubd E_i: E\subset \bigcup_{i=1}^\infty E_i, \text{ where the } E_i \text{ are compact} \bigg\},
\end{equation}
where the infimum is over all countable covers of $E$.
See, for example, \cite{FaBk,Tri}, for details of this equivalence. By virtue of the countable unions in the definition, it is immediate that $\mubd$ is countably stable. Moreover, in many applications of packing dimension, the form \eqref{mpddef} is used rather than the measure definition.

Cutler \cite{Cu} gives a detailed treatment of Frostman and anti-Frostman-type results for packing dimensions, see also \cite[Chapter 2]{Fa3}. In particular:

\begin{lem} {\rm \cite[Lemma 3.4]{Cu}}\label{frostman-packing}
	Let $E$ be a non-empty compact subset of $\R^d$. For every $0\leq s<\pkd E=\mubd E$, there exists $\mu\in\mathcal{P}(E)$  such that for all $x\in\R^d$ there are arbitrarily small  positive $r$ such that $\mu(B(x,r)) \leq r^{s}$. 	
\end{lem}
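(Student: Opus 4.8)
The plan is to first pass to a compact subset of $E$ whose local upper box dimension exceeds $s$ everywhere, and then to build the required measure on this subset by a nested, Moran-type construction.

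For the reduction I would exploit that $\pkd = \mubd$ is countably stable, as noted just after \eqref{mpddef}. Fix a countable basis of $\R^d$ and let $\mathcal U$ be the union of all basis cubes $W$ with $\pkd(E \cap W) \leq s$. As this is a countable union, countable stability gives $\pkd(E \cap \mathcal U) \leq s$, and since $s < \pkd E$, finite stability forces $\pkd F = \pkd E > s$ for the compact set $F := E \setminus \mathcal U$. The purpose of deleting $\mathcal U$ is that $F$ is now \emph{dimension-homogeneous}: for every open $V$ meeting $F$ we have $\pkd(F \cap V) > s$. Indeed, were $\pkd(F\cap V)\le s$ for some $V$ meeting $F$ at a point $x$, then a basis cube $W$ with $x\in W\subseteq V$ would satisfy $E\cap W\subseteq (F\cap W)\cup(E\cap\mathcal U)$, whence $\pkd(E\cap W)\le s$, so $W\subseteq\mathcal U$, contradicting $x\in F$. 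Since $\pkd \leq \ubd$ (take the cover $\{E\}$ in \eqref{mpddef}), we conclude $\ubd(F \cap V) > s$ for every open $V$ meeting $F$; now fix $s'$ with $s < s' < \pkd F$.

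Next I would construct $\mu \in \mathcal{P}(F) \subseteq \mathcal{P}(E)$ by distributing mass down a tree of dyadic cubes. Start from a single cube $Q_0 \supseteq F$ of mass $1$. Given a cube $Q$ meeting $F$ in its interior and carrying mass $p_Q \leq 1$, use $\ubd(F \cap \mathrm{int}\,Q) > s'$ to choose a level $\ell_Q$, \emph{as large as we please}, at which $F \cap Q$ meets at least $2^{\ell_Q s'}$ dyadic subcubes of side $2^{-\ell_Q}$, and split $p_Q$ equally among them; each then receives mass at most $p_Q 2^{-\ell_Q s'} \leq (2^{-\ell_Q})^{s'}$. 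Iterating over all cubes of each generation and passing to a weak-$*$ limit yields a Borel probability measure $\mu$ with $\spt\mu \subseteq F$.

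Finally I would verify the conclusion. For $x \notin \spt\mu$ the bound is trivial once $r < \mathrm{dist}(x,\spt\mu)$, so fix $x \in \spt\mu$. At each generation the cube $Q \ni x$ is refined to level $\ell_Q$; taking $r = 2^{-\ell_Q} \to 0$, the ball $B(x,r)$ meets at most $3^d$ of the level-$\ell_Q$ subcubes of $Q$, so the mass it collects from inside $Q$ is at most $3^d r^{s'}$, which is $\leq r^s$ for small $r$ because $s' > s$ (the gap $s'-s$ absorbs the constant $3^d$). This produces arbitrarily small admissible radii. The \textbf{main obstacle} is the contribution to $B(x,r)$ from subcubes lying inside a \emph{neighbouring} generation-$k$ cube $Q'$ when $x$ is close to $\partial Q$: the refinement level $\ell_{Q'}$ may be much coarser, so the thin sliver of $Q'$ met by $B(x,r)$ could carry too much mass. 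I expect to control this by imposing a separation (a ``moat'') when selecting the retained subcubes of each $Q$ --- keeping only a maximal $3\cdot 2^{-\ell_Q}$-separated family, which remains comparable in cardinality to the covering number and hence still has at least $\asymp 2^{\ell_Q s'}$ members --- so that $B(x,r)$ effectively meets a bounded number of active cubes, with the surviving dimensional constants again absorbed by $s'-s$.
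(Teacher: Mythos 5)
The paper does not actually prove this lemma: it is quoted from Cutler \cite{Cu}, and the closest in-house analogue is the second half of the proof of Theorem \ref{MB}. Your reduction to a compact $F$ with $\pkd(F\cap V)>s$, hence $\ubd(F\cap V)>s$, for every open $V$ meeting $F$ is correct and standard (modulo a small slip: the construction later invokes $\ubd(F\cap\mathrm{int}\,Q)>s'$ with $s'>s$, so you must run the deletion of $\mathcal U$ at threshold $s'$ from the outset, which is harmless since $s'<\pkd E$). The genuine gap is exactly at the step you flag as the main obstacle, and your proposed repair does not close it. The dangerous contribution to $\mu(B(x,r))$ with $r=2^{-\ell_{Q_j}}$ is not from the siblings inside $Q_j$ (those are already fine without any separation: at most $3^d$ cubes, each of mass at most $r^{s'}$), but from a same-generation cube $Q'$ in a \emph{different branch} abutting $x$, whose refinement level may satisfy $2^{-\ell_{Q'}}\gg r$. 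A single retained level-$\ell_{Q'}$ subcube of $Q'$ meeting $B(x,r)$ can carry mass up to $2^{-\ell_{Q'}s'}\gg r^{s'}$, and recursing into its descendants does not help, since arbitrarily many of them may fall inside $B(x,r)\cap Q'$. Keeping a $3\cdot2^{-\ell_Q}$-separated subfamily \emph{within each parent} leaves this configuration untouched: a retained subcube of $Q$ may sit in a corner of $Q$, flush against $\partial Q$ and against $Q'$.

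What is needed is a separation that propagates across generations, so that any two generation-$k$ pieces are at distance at least twice the larger of their diameters; then $B(x,r)$, with $r$ the size of the generation-$k$ piece containing $x$, meets no other generation-$k$ piece and the in-parent bound suffices. The clean way to arrange this (essentially Cutler's route) is to build with balls centred in $F$ rather than with dyadic cubes: given a piece $B(y,\rho)$ with $y\in F$, use $\ubd\big(F\cap B(y,\rho/2)\big)>s'$ to extract, at an arbitrarily large scale $2^{-n}$, a $4\cdot2^{-n}$-separated set of at least $2^{ns}$ points of $F$ in the concentric half-ball, and let the children be the balls of radius $2^{-n}$ about these points, each receiving mass at most $2^{-ns}$ times that of the parent. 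Children are then automatically well inside their parent, the two-diameter separation is inherited by induction across all branches, and $\mu(B(x,2^{-n}))\le\mu(B_{k+1})\le 2^{-ns}$ follows at once. With half-open dyadic cubes the analogous moat along $\partial Q$ is problematic because $F\cap Q$ may cluster on $\partial Q$. Note finally why the proof of Theorem \ref{MB} avoids all of this: there the relevant quantity is the \emph{lower} box dimension, so one refinement level $n_{k_m}$ works for every generation-$m$ cube simultaneously, every level-$n_{k_m}$ cube has small mass, and $B(x,r)$ is covered by $2^d$ of them. For upper box/packing dimension that uniformity is precisely what is unavailable, which is why your per-cube levels $\ell_Q$ (correct in themselves) force the additional cross-branch separation argument.
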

 Due to the analytic approximation lemma  (see \cite{Cu}, Lemma 2.5),  the result in Lemma \ref{frostman-packing} remains valid for all analytic sets.

\begin{lem} {\rm\cite{Cu,Fa3}}\label{re-packing-upper-local-dimension}
	Let $E\subset \R^d$ be a Borel set and let $\mu$ be a Borel measure with
	 $\mu(E)>0$. If for all $x\in E$
		\begin{equation*}
		\limsup_{r\to 0}\frac{\log\mu(B(x,r))}{\log r}\geq s.
	\end{equation*}
	then $\pkd E=\mubd E \geq s$. 
\end{lem}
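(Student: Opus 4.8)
The plan is to work through the modified-upper-box-dimension form $\pkd E=\mubd E$ of \eqref{mpddef}, reducing the statement to a single bounded set on which a covering argument can be run. First I would unwind the hypothesis: since $\log r<0$ for small $r$, the inequality $\limsup_{r\to0}\log\mu(B(x,r))/\log r\geq s$ says precisely that for every $\e>0$ and every $x\in E$ there are arbitrarily small $r$ with $\mu(B(x,r))\leq r^{s-\e}$. Now fix an arbitrary countable cover $E\subset\bigcup_i E_i$ by compact sets as in \eqref{mpddef}. Since $\mu(E)>0$ and $\mu(E)\leq\sum_i\mu(E\cap E_i)$, some $E_{i_0}$ satisfies $\mu(F)>0$ for $F:=E\cap E_{i_0}$; here $F$ is bounded (a subset of the compact $E_{i_0}$), $\mu(F)<\infty$, and the hypothesis still holds at every point of $F\subset E$. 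As $F\subset E_{i_0}$ gives $\ubd E_{i_0}\geq\ubd F$, it suffices to prove the claim: \emph{if $F$ is bounded with $\mu(F)>0$ and $\limsup_{r\to0}\log\mu(B(x,r))/\log r\geq s$ for all $x\in F$, then $\ubd F\geq s$.} Granting this, $\sup_i\ubd E_i\geq\ubd E_{i_0}\geq s$ for every such cover, and taking the infimum over covers yields $\mubd E\geq s$.

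To prove the claim, fix $\e>0$ (the case $s=0$ being trivial) and consider the family $\mathcal V$ of all closed balls $B(x,r)$ with $x\in F$, $r\leq\delta$, and $\mu(B(x,r))\leq r^{s-\e}$. By the reformulated hypothesis $\mathcal V$ is a fine (Vitali) cover of $F$. Since $\mu$ is a locally finite Borel measure with $\mu(F)<\infty$, the Vitali covering theorem supplies a countable \emph{disjoint} subfamily $\{B(x_j,r_j)\}_j\subset\mathcal V$ with $\mu\big(F\setminus\bigcup_j B(x_j,r_j)\big)=0$, so that $\mu(F)\leq\sum_j\mu(B(x_j,r_j))\leq\sum_j r_j^{\,s-\e}$.

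Next I would group the balls by dyadic scale, writing $J_m=\{j: 2^{-m-1}<r_j\leq 2^{-m}\}$, so that $\mu(F)\leq\sum_m |J_m|\,2^{-m(s-\e)}$. Disjointness of closed balls of radius $>2^{-m-1}$ forces their centres to be more than $2^{-m}$ apart, so $\{x_j:j\in J_m\}$ is a $2^{-m}$-separated subset of $F$ and hence $|J_m|\leq N_{2^{-m}}(F)$, using the characterisation of $N_r$ as the maximal number of $r$-separated points. Choosing $\delta=2^{-M}$ (so every contributing $m\geq M$) gives, for every $M$,
\begin{equation*}
\mu(F)\ \leq\ \sum_{m\geq M} N_{2^{-m}}(F)\,2^{-m(s-\e)}.
\end{equation*}
If $\ubd F<s-\e$, there are $\eta>0$ and $M_0$ with $N_{2^{-m}}(F)\leq 2^{m(s-\e-\eta)}$ for $m\geq M_0$, whence for $M\geq M_0$ the right-hand side is at most $\sum_{m\geq M}2^{-m\eta}=2^{-M\eta}/(1-2^{-\eta})\to0$, contradicting $\mu(F)>0$. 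Thus $\ubd F\geq s-\e$, and letting $\e\downarrow0$ proves the claim.

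The main obstacle is the scale mismatch in the hypothesis: the bound $\mu(B(x,r))\leq r^{s-\e}$ holds only along a sequence of radii depending on $x$, and only at the \emph{exact} radius $r$. This rules out the elementary $5r$-covering lemma, which would need control at the dilated radius, and makes the measure-theoretic Vitali covering theorem the essential tool, since it extracts disjoint balls covering $\mu$-almost all of $F$ using the balls themselves; the dyadic pigeonholing then converts disjointness into the $r$-separated sets driving the box-count lower bound.
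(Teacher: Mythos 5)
Your proof is correct. Note, though, that the paper does not prove this lemma at all --- it is quoted from Cutler and from \emph{Techniques in Fractal Geometry} --- so the relevant comparison is with the standard argument in those references. That argument shares your key step (the Vitali covering theorem for Radon measures applied to the fine cover of a positive-measure piece $F$ by balls with $\mu(B(x,r))\leq r^{s-\e}$), but it converts the resulting disjoint family directly into a lower bound for the packing \emph{premeasure} rather than for box counts: the disjoint balls form a $\delta$-packing of $F$, so $\mathcal{P}^{s-\e}_\delta(F)\geq\sum_j(2r_j)^{s-\e}\geq 2^{s-\e}\sum_j\mu(B(x_j,r_j))\geq 2^{s-\e}\mu(F)$ for every $\delta$, whence $\mathcal{P}_0^{s-\e}(F)\geq 2^{s-\e}\mu(F)$ for every $F$ of positive measure, and summing over a countable cover gives $\mathcal{P}^{s-\e}(E)\geq 2^{s-\e}\mu(E)>0$ and $\pkd E\geq s-\e$. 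You instead push the disjoint family through the modified-upper-box form \eqref{mpddef}, using dyadic pigeonholing and $2^{-m}$-separated centres to force $N_{2^{-m}}(F)$ to be large. Both routes are valid; the premeasure route is slightly shorter (no case analysis over scales) and yields the quantitative conclusion $\mathcal{P}^{s}(E)\geq 2^{s}\mu(E)$, while yours stays entirely in box-counting language at the cost of invoking the identity $\pkd=\mubd$, which the paper only quotes. One point worth flagging explicitly in your write-up: the Vitali covering theorem for measures needs $\mu$ to be Radon (locally finite and Borel regular). The hypothesis does force $\mu(B(x,r))\leq r^{s-\e}<\infty$ for arbitrarily small $r$ at each $x\in E$, and the paper's standing conventions make this a non-issue, but as stated ``Borel measure'' alone does not literally license the theorem.
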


	Let $E\subset \R^d$ be a Borel set.
From Lemmas \ref{frostman-packing} and \ref{re-packing-upper-local-dimension}, we deduce the following equivalent definitions for  packing or modified upper box dimension.
	\begin{align}\label{packingdimension}
	\pkd  E =&\sup_{\mu\in\mathcal{P}(E)}\inf_{x\in E}\limsup_{r\to 0}\frac{\log\mu(B(x,r))}{\log r}\notag\\	
	=&\sup_{\mu\in\mathcal{P}(E)}\inf_{x\in E}\limsup_{n\to \infty}\frac{\log\mu(C_n(x))}{-n\log 2}\notag\\
	=&\sup\{s\geq0:\text{there exists }\mu\in\mathcal{P}(E)\text{ and } c>0 \text{ such that for each }x\in\R^d, \text{there is }\notag\\
	&\qquad  \{r_n(x)\}_n\searrow0 \text{ such that } \mu(B(x,r_n(x)))\leq cr_n(x)^{s}\}\\	
	=&\sup\{s\geq0: \text{there exists }\mu\in\mathcal{P}(E)  \text{ such that for each }x\in\R^d, \mu(C_{n}(x))\leq  2^{-ns}\notag\\ 
	&\qquad 	\text{ for infinitely many } n\}.\notag
	\end{align}

\subsection{Modified lower box dimension}

Analogously to \eqref{mpddef} we write $\mlbd(E)$ for the {\it modified lower box dimension} of $E \subset  \mathbb{R}^d$, defined by 
$$\mlbd E:= \inf\bigg\{ {\sup_i }  \  \lbd E_i: E\subset \bigcup_{i=1}^\infty E_i, \text{ where the } E_i \text{ are compact} \bigg\}.$$
It follows directly from the definitions that 
$\hdd E\leq \mlbd E\leq \lbd E$.

Modified lower box  dimension seems to have been neglected compared to its upper counterpart and we now obtain a Frostman-type characterisation involving balls with radii of subsequences of given sequences. Parts of the proof follow the lines of  \cite[Lemma 3.4]{Cu}.

\begin{theo}\label{MB}
Let $E \subset  \mathbb{R}^d$ be compact. Then
\begin{eqnarray*}
\mlbd E 
	&= &\sup \{s  \geq 0:  \exists F\subset E  \text{{\rm  \ s.t.  for any sequence $\{r_k\}_k\searrow 0$ there exists $\{r_{k_i}\}_i\subset\{r_k\}_k$} and}\notag\\ 
&&  \text{{\rm  a Borel  measure $\mu$ with $ \spt\mu=F$ s.t.  $\mu(B(x,r_{k_i}))\leq r_{k_i}^{s }$  for all  $x \in \mathbb{R}^d$ and $i\in \mathbb{N}$\}}}\\
&=& \sup_{F\subset E}\inf_{\{r_k\}_k\searrow 0}  \sup_{\mu:\spt\mu=F}\limsup_{k\to\infty}\inf_{x\in E}\frac{\log\mu(B(x,r_k))}{\log r_k}.
\end{eqnarray*}
\end{theo}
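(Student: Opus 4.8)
The plan is to prove the theorem by establishing the two displayed characterisations as a chain of inequalities, treating the two right-hand expressions together since they clearly encode the same quantity (the second is just the $\limsup$/$\log$ reformulation of the first, via the dyadic-continuous interchange already noted in the introduction and the standard argument that an inequality $\mu(B(x,r_{k_i})) \leq r_{k_i}^s$ at a subsequence is equivalent to $\limsup_k \inf_x \log\mu(B(x,r_k))/\log r_k \geq s$). So the real content is showing $\mlbd E$ equals $\sup\{s : \ldots\}$, where the condition quantifies over subsets $F$, then over \emph{all} sequences $r_k \searrow 0$, then asserts existence of a subsequence and a measure on $F$ with the Frostman bound along that subsequence. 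I would prove $\leq$ and $\geq$ separately.

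For the lower bound $\mlbd E \geq s$ whenever the Frostman-type condition holds, I would mimic the anti-Frostman half of Proposition~\ref{boxprop2} and the structure of Cutler's Lemma~\ref{frostman-packing}. Suppose $F \subset E$ witnesses the condition at level $s$. Take any countable cover $E \subset \bigcup_i E_i$ by compact sets; I must show $\sup_i \lbd E_i \geq s$. Since $\spt\mu = F \subset E$, by Baire category (a complete metric space is not a countable union of nowhere-dense sets) some $E_{i_0}$ contains a relatively open piece of $F$ on which $\mu$ puts positive mass, so $\mu$ restricted there is supported in $E_{i_0}$. The key is that for \emph{any} chosen sequence $r_k \searrow 0$ we may extract $r_{k_i}$ and a measure giving $\mu(B(x,r_{k_i})) \leq r_{k_i}^s$; feeding this into Lemma~\ref{boxcount} (the covering-number/integral duality $N_r(E_{i_0})^{-1} \asymp \inf_\mu \int \mu(B(x,r))\,\mathrm{d}\mu$) forces $N_{r_{k_i}}(E_{i_0}) \gtrsim r_{k_i}^{-s}$ along the subsequence. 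Because the sequence $r_k$ was arbitrary, one deduces $\liminf_{r\to 0}\log N_r(E_{i_0})/(-\log r) \geq s$, i.e.\ $\lbd E_{i_0} \geq s$, giving $\sup_i \lbd E_i \geq s$ and hence $\mlbd E \geq s$.

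For the upper bound $\mlbd E \leq \sup\{s : \ldots\}$, I would argue contrapositively: if $s > \mlbd E$ then the Frostman condition fails at $s$. Since $s > \mlbd E$ there is a compact cover $E \subset \bigcup_i E_i$ with $\lbd E_i < s$ for every $i$; thus for each $i$ there is a sequence $r^{(i)}_k \searrow 0$ along which $N_{r^{(i)}_k}(E_i)$ is small, i.e.\ $N_{r^{(i)}_k}(E_i) \leq (r^{(i)}_k)^{-s}$. The goal is to interleave these into a single master sequence $\{r_k\}$ that defeats \emph{every} candidate $F$ and measure simultaneously. Given any $F \subset E$, by Baire category $F$ meets some $E_i$ in a set carrying positive $\mu$-mass for any $\mu$ with $\spt\mu = F$; along that $E_i$'s sparse sequence the efficient cover by $N_{r^{(i)}_k}(E_i)$ balls of radius $r^{(i)}_k$ forces, by pigeonhole, some ball to carry mass $\geq 1/N \geq (r^{(i)}_k)^{s}$, which (up to the harmless constant absorbed into choosing $s' < s$) contradicts $\mu(B(x,r_{k_i})) \leq r_{k_i}^{s}$. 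Realising a single sequence $\{r_k\}$ that works against all $F$ at once is the delicate point, so I would instead present the dyadic/nested reformulation (the second displayed identity) and build the master sequence by a diagonal enumeration over the countably many $E_i$.

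The main obstacle I anticipate is precisely this quantifier structure: the characterisation demands that the \emph{same} set $F$ succeed against \emph{every} sequence $r_k$ (choosing a subsequence each time), so in the lower bound I must ensure the extracted subsequences all feed a single covering-number lower bound, and in the upper bound I must manufacture one adversarial sequence that no $(F,\mu)$ pair can survive. The Baire-category step—locating, for an arbitrary $F$, a cover piece $E_i$ on which a positive-mass restriction of $\mu$ lives—is what couples the abstract modified-dimension infimum to the concrete covering estimates; getting the constants to pass cleanly from $\asymp$ in Lemma~\ref{boxcount} to a clean $r^s$ bound is routine once an auxiliary exponent $s' \in (\mlbd E, s)$ is inserted, exactly as the $\epsilon$-shifts are handled in \eqref{infsum} and in the proof of Proposition~\ref{boxprop2}.
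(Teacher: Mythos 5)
Your first direction --- that the Frostman-type condition at level $s$ forces $\mlbd E \geq s$ --- is essentially the paper's argument: given a countable closed cover of $E$, Baire category locates a piece $E_{j_0}$ whose intersection with $F$ is relatively open in $F$ and hence carries positive mass for every $\mu$ with $\spt\mu=F$; the normalised restriction, fed into the covering-number duality of Lemma~\ref{boxcount}, then defeats any adversarial sequence witnessing $\lbd E_{j_0}<s-\epsilon$. That half is sound (the constants are indeed absorbed by the $\epsilon$-shift).

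The second direction contains a genuine logical gap. Write $S$ for the set of admissible $s$ in the statement. You propose to prove $\mlbd E\leq\sup S$ ``contrapositively: if $s>\mlbd E$ then the Frostman condition fails at $s$.'' But the implication ``$s>\mlbd E\Rightarrow s\notin S$'' is the contrapositive of ``$s\in S\Rightarrow s\leq\mlbd E$'', i.e.\ of $\sup S\leq\mlbd E$ --- the inequality you already established in the first direction. It says nothing about whether $s<\mlbd E$ implies $s\in S$, which is what $\mlbd E\leq\sup S$ requires. Consequently your ``master adversarial sequence'' construction, even if completed, would only re-derive the easy inequality. The missing content is the constructive half: for each $s<\mlbd E$ one must exhibit a \emph{single} set $F\subset E$ such that for \emph{every} sequence $r_k\searrow 0$ there exist a subsequence and a measure supported exactly on $F$ with $\mu(B(x,r_{k_i}))\leq r_{k_i}^s$. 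This is where the paper does its real work: it sets $\D_n^s=\{C_n:\mlbd(E\cap C_n)>s\}$ and $F=\bigcap_n\bigcup_{C_n\in\D_n^s}C_n$, shows $\lbd(C_n\cap F)>s$ for every retained cube (using finite stability of $\mlbd$), and then, for a given sequence, inductively selects levels $n_{k_m}$ deep enough that each retained cube subdivides into sufficiently many retained subcubes to spread mass uniformly, obtaining measures $\mu_m$ with $\mu_m(C_{n_{k_i}})\leq 2^{-(d+2s)}2^{-n_{k_i}s}$, before passing to a weak limit and verifying $\spt\mu=\overline{F}$. None of this appears in your proposal, so the theorem is only proved in one direction.
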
  
\begin{proof}
		We prove the first equality from which the second is immediate.
	Let $s$ be such that there exists $F\subset E$ such that for every sequence $\{r_k\}_k\searrow 0$ there exists a subsequence $\{r_{k_i}\}$ and  a Borel measure $\mu$ with $\spt\mu=F$  such that for  all   $i\in\mathbb{N}$,
	\begin{equation}\label{ineq}
	\sup_{x\in\R^d}	\mu(B(x,r_{k_i}))\leq  r_{k_i}^s.
	\end{equation}
	Let $\{E_j\}_j$ be an arbitrary countable  family  of closed subsets of  $E$ satisfying $E=\cup_j E_j$. 
	Then $F=\cup_j( E_j\cap F) $. 
	Since $ F$ is compact, by Baire's category theorem, there  exists an  index  $j_0$ such that $E_{j_0}\cap F$ has non-empty interior relative to $F$. 
	We claim that 
	\begin{equation}\label{claim}
		\lbd E_{j_0}\geq s.
	\end{equation}
	 Otherwise, there exists $\epsilon>0$ such that $\lbd E_{j_0}<s-\epsilon$.  By \eqref{lboxfrost}, there exists a decreasing sequence $\{r_k\}_k\searrow 0$ such that 
	\begin{equation}\label{ineq2}
		\sup_{\mu \in \mathcal{P}(E_{j_0})}\inf_{x\in E_{j_0}}\frac{\log  \mu(B(x,r_k))}{ \log r_k}<s-\epsilon.
	\end{equation}
	 For any Borel measure $\mu $ with $\spt\mu=F$,  the non-empty interior of $E_{j_0}\cap F$ implies  $\mu(E_{j_0})>0$.  Thus, $$\widetilde{\mu}=\dfrac{\mu|_{E_{j_0}}}{\mu(E_{j_0})} \in \mathcal{P}(E_{j_0}).$$
	According to \eqref{ineq2},  we have  $$\sup_{x\in E_{j_0}}\widetilde{\mu}(B(x,r_k))>r_k^{s-\epsilon}\quad\text{for all }k.$$
	Thus,  for sufficiently large $k$,
	\begin{equation*}
		\sup_{x\in E_{j_0}} {\mu}(B(x,r_k))>\mu(E_{j_0})\cdot r_k^{s-\epsilon}>r_k^s,
	\end{equation*}
	 which contradicts  \eqref{ineq}.  Therefore, we have proved that \eqref{claim} holds.  Consequently, $$\mlbd E \geq s.$$

	For the opposite inequality, assume that $E$ is   contained in the interior of the unit cube $C_0=[0,1]^d$. Let $0\leq s<\mlbd E$.		
	Define $\D_n$ as the collection of all dyadic half-open $n$th-level cubes of side lengths $2^{-n}$ that are contained in $C_0$.
	Set
	\begin{equation}\label{Dns}
		\D_n^s=\{C_n\in \D_n: \mlbd(E\cap C_n)>s\}.
	\end{equation}
   Since $\mlbd$ is finitely stable, for   $n\in\mathbb{N}$, 
   $$\mlbd E=\max_{C_n\in \D_n}\mlbd(E\cap C_n),$$  
   so  $\D_n^s\neq\emptyset$.
	Define   $$F_n=\bigcup_{C_n\in \D_n^s}C_n \quad \textit{ and }\quad F=\bigcap_n F_n.$$ 
	If $x\notin E$, then  $C_n(x)\cap F_n=\emptyset$ for $n$ sufficiently large    (where $C_n(x)$ is the $n$th-level dyadic cube containing $x$), which implies that  $x\notin F$. Then $	F\subset  E$.
	 It is clear that
	\begin{equation*}
		C_n\cap E=C_n\cap \big(F\cup(E\setminus F)\big)=\big(C_n\cap F\big)\cup\big(\bigcup_m\bigcup_{C_m\in\D_m\setminus\D_m^s}C_n\cap E\cap C_m\big).
	\end{equation*}
	 By   (\ref{Dns}), 
	 $$\mlbd\big(\bigcup_m\bigcup_{C_m\in\D_m\setminus\D_m^s}C_n\cap E\cap C_m\big)\leq s.$$ 
	 Then for $C_n\in \D_n^s$,
	\begin{equation}\label{ineq-s}
		\lbd\big(C_n\cap F\big)\geq \mlbd\big(C_n\cap F\big)>s,
	\end{equation}
a property that we use repeatedly.	 Let $\{r_k\}_k$ be an arbitrary   sequence decreasing to zero. 
For each $r_k$, there exists a unique integer $n_k$ such that
	\begin{equation}\label{rknk}
		2^{-(n_k+2)}\leq r_k< 2^{-(n_k+1)}.
	\end{equation}
We will define inductively an increasing sequence of integers $\{k_m\}_m$, set collections  $\mathcal{C}_{m} \subset  \D_{n_{k_m}}^s$, and probability measures $\mu_m$ 
supported by $\bigcup \{\overline{C}_{n_{k_m}}: C_{n_{k_m}} \in\mathcal{C}_{m}\}$
such that for all  $C_{n_{k_i}}\in\mathcal{C}_{i}$ with $1\leq i\leq m$,
\begin{equation}\label{indcon}
		\mu_m(C_{n_{k_i}})\leq  2^{-(d+2s)}\,2^{-n_{k_i}s}.
\end{equation}

To begin we choose $n_{k_1}$ to be the least  integer in the sequence $\{n_k\}_k$ such that  
	 \begin{equation}\label{snk1}
	 	2^{-{n_{k_1}}s}\,\#\D_{n_{k_1}}^s\geq 2^{d+2s},
	 \end{equation}  
	 where $\#$ denotes cardinality.
	We index the cubes in $\D_{n_{k_1}}^s$ and set  
	$$\mathcal{C}_{1}:=\D_{n_{k_1}}^s=\{C(j_1):1\leq j_1\leq \#\D_{n_{k_1}}^s\}.$$
	Let $\mu_1$  be the probability measure whose mass is uniformly distributed over each cube in $ \mathcal{C}_{1}$, that is
	\begin{equation}\label{mu1}
		\mu_1=\sum_{j_1}\frac{1}{\#\D_{n_{k_1}}^s}\dfrac{\mathcal{L}^d|_{C(j_1)}}{\mathcal{L}^d(C(j_1))},
	\end{equation}
	where $\mathcal{L}^d$ is $d$-dimensional Lebesgue measure and $\mathcal{L}^d|_C$ is its restriction to a set $C$.
	By \eqref{snk1} and \eqref{mu1}, for all $C_{n_{k_1}}\in \mathcal{C}_{1}$, 
	\begin{equation*}\label{k-1-1}
		\mu_1(C_{n_{k_1}})=\frac{1}{\#\D_{n_{k_1}}^s}\leq 2^{-(d+2s)}\,2^{-n_{k_1}s}.
	\end{equation*}
	By (\ref{ineq-s}), for all $C(j_1)\in\mathcal{C}_1$, 
	$$\underline{\dim}_B\big(F\cap C(j_1)\big)\geq\underline{\dim}_{MB}\big(F\cap C(j_1)\big)> s.$$
	
	Assume inductively that we have selected  integers $\{n_{k_i}\}_{1\leq i\leq m-1}\subset \{n_k\}_k$ and  set collections $\{\mathcal{C}_{i}\}_{1\leq i\leq m-1}$, where the cubes in $\mathcal{C}_{i}$ are labeled as follows:
	\begin{align*}
		\mathcal{C}_{i}=\big\{C(j_1,\cdots,j_i):1\leq j_1\leq \#\D_{n_{k_1}}^s,1\leq j_2\leq\#\D_{n_{k_2}}^s(j_1),\cdots, 1\leq j_i\leq\#\D_{n_{k_i}}^s(j_1,\cdots,j_{i-1})\big\},
	\end{align*}
	where 
	$$\D_{n_{k_l}}^s(j_1,\cdots,j_{l-1})=\{C_{n_{k_l}}\in \D_{n_{k_l}}^s:
	C_{n_{k_l}} \subset C(j_1,\cdots,j_{l-1})\}.$$
	Assume further that  for $1\leq i\leq m-1$, probability measures $ \mu_i $ are  defined such that 	for  $C_{n_{k_l}}\in\mathcal{C}_{l}$ with $1\leq l\leq i\leq m-1$,
	\begin{equation}\label{li}
		\mu_i(C_{n_{k_l}})\leq 2^{-(d+2s)}\,2^{-n_{k_l}s}.
	\end{equation}
	Using \eqref{ineq-s} we select $k_m>k_{m-1}$ such that $n_{k_m}$ is the least integer in the sequence $\{n_k\}_k$ such that  for all multi-indices  $(j_1,\cdots,j_{m-1})$,
	\begin{equation}\label{nkmineq}
			2^{-{n_{k_m}}s}\,\#\D_{n_{k_m}}^s(j_1,\cdots,j_{m-1})\, \geq 2^{d+2s}\mu_{m-1}\big(C(j_1,\cdots,j_{m-1})\big).
	\end{equation}
		Index the cubes in each  $\D_{n_{k_m}}^s(j_1,\cdots, j_{m-1})$ as
		 $$\big\{C(j_1,\cdots, j_{m-1},j_m):1\leq j_m\leq \#\D_{n_{k_m}}^s(j_1,\cdots, j_{m-1})\big\}$$
	and set
	\begin{equation*}
		\mathcal{C}_{m}=\big\{C(j_1,\cdots, j_{m-1},j_m):1\leq j_1\leq \#\D_{n_{k_1}}^s,\cdots, 1\leq j_m\leq \#\D_{n_{k_m}}^s(j_1,\cdots, j_{m-1})\big\}.
	\end{equation*}	
	We now define $\mu_m$ 	as the measure such that for each multi-index  $(j_1,\cdots,j_{m-1})$, the measure $\mu_{m-1}$ of  $C(j_1,\cdots, j_{m-1})$  is uniformly distributed over all the $n_{k_m}$th-level cubes in $\D_{n_{k_m}}^s(j_1,\cdots, j_{m-1})$, that is,
	\begin{equation}\label{mum}
		\mu_m=\sum_{(j_1,\cdots,j_{m-1})}\sum_{j_m}\frac{\mu_{m-1}\big(C(j_1,\cdots, j_{m-1})\big)}{\#\D_{n_{k_m}}^s(j_1,\cdots, j_{m-1})}\, \dfrac{\mathcal{L}^d|_{C(j_1,\cdots,j_m)}}{\mathcal{L}^d\big(C(j_1,\cdots,j_m)\big)}.
	\end{equation}
	By \eqref{nkmineq}, for   each $C_{n_{k_m}}\in\mathcal{C}_m$,
	\begin{equation*}
		\mu_m(C_{n_{k_m}})=\frac{\mu_{m-1}\big(C(j_1,\cdots, j_{m-1})\big)}{\#\D_{n_{k_m}}^s(j_1,\cdots, j_{m-1})}\leq 2^{-(d+2s)}\, 2^{-n_{k_m}s}.
	\end{equation*}
	By \eqref{li},  it follows  that for $C_{n_{k_i}}\in\mathcal{C}_i$ with  $1\leq i<m$,
	\begin{equation*}
		\mu_m(C_{n_{k_i}})=\mu_{m-1}(C_{n_{k_i}})\leq 2^{-(d+2s)}2^{-n_{k_i}s}.
	\end{equation*}
This completes the inductive step and establishes \eqref{indcon}.	 
	 
	Let $\{\mu_{m_l}\}_l$ be a weakly convergent subsequence of $\{\mu_m\}_m$ with limit $\mu$, say. 
	Define $G_n=\R^d\setminus \overline{F_n}$ and $G=\R^d \setminus \overline{F}=\bigcup_n G_n$.
	Then $\mu_{m} $ is   fully  supported on $\overline{F}_{n_{k_m}}$ and $G_n$ is increasing, so 
	 for  $m\geq i$,  
	 	$$\mu_m(G_{n_{k_i}})\leq \mu_m(G_{n_{k_m}})=0.$$
	Since $G_{n_{k_i}}$ is open, 
	$$\mu(G_{n_{k_i}})\leq\liminf_m\mu_{m}(G_{n_{k_i}})=0.$$
	Consequently, $$\mu(G)=\mu\big(\bigcup_iG_{n_{k_i}}\big)=0,$$ which implies that $\mu$ is  supported on $\overline{F}$.
 For any $x\in \overline{F}$ and any open set $V$ containing $x$, there exists $C_{n_{k_m}}\in\mathcal{C}_m$ such that $x\in\overline{C}_{n_{k_m}}$ and $\overline{C}_{n_{k_m}}\subset V$. It follows that
		\begin{equation*}
			\mu(V)\geq\mu(C_{n_{k_m}})=\mu_m(C_{n_{k_m}})>0.
		\end{equation*}
		That is, $\spt\mu= \overline{F}$.
		Since $E$ is compact and $F\subset  E$, it is clear that $\overline{F}\subset  E$.
	By \eqref{rknk}, for every $x\in\R^d$ and $i\in\mathbb{N}$, the ball $B(x,r_{k_i})$ is contained in a interior of the union of $2^d$ cubes in $\D_{n_{k_i}}$ which we denote by $U(x,r_{k_i})$.
Then
\begin{align*}
	\mu(B(x,r_{k_i}))&\leq\mu\big(U(x,r_{k_i})\big)\\
	&\leq\liminf_{m}\mu_{ m}\big(U(x,r_{k_i})\big)\\
	&\leq 2^d \liminf_{m}\max_{C_{n_{k_i}}\in \D_{n_{k_i}}}\mu_{m}\big(C_{n_{k_i}}\big)\\
	&\leq 2^d\cdot 2^{-(d+2s)}\cdot 2^{-n_{k_i}s}\\
	&\leq   r_{k_i}^s,
\end{align*}
as required.
\end{proof}

By minor modification, Theorem \ref{MB} may be expressed in a dyadic form:
\begin{eqnarray}\label{mlbdim1}
	\mlbd E &= &\sup \{s  \geq 0: \exists F\subset E \text{{\rm \ s.t.  for $\forall \{n_k\}_k\nearrow \infty, \exists\ \{n_{k_i}\}_i\subset\{n_k\}_k$ and a  Borel measure   }}\notag\\ 
	&& \mu\   \text{{\rm  with  $\spt\mu=F$ such that $\mu(C_{n_{k_i}})\leq 2^{-n_{k_i}s}$  for all  $C_{n_{k_i}}\in \D_{n_{k_i}}$\}}}\\
	&=& \sup_{F\subset E}\inf_{\{n_k\}\nearrow\infty} \sup_{\mu:\spt\mu=F} \limsup_{k\to\infty} \inf_{C_{n_k}\in \D_{n_k}} \frac{\log  \mu(C_{n_k})}{ -{n_k}\log 2}. \notag
\end{eqnarray}
In the same way as in \eqref{infsum} and Proposition \ref {cordim} the characterisations of  $\mlbd$ in Theorem \ref{MB} have an integral version:
	\begin{equation}\label{MBL}
		\mlbd E = \sup_{F\subset E}\inf_{\{r_k\}\searrow0}\sup_{\mu:\spt\mu=F}\limsup_{k\to\infty}   \frac{\log  \int\mu(B(x,r_k))\mathrm{d}\mu(x)}{ \log r_k}.	
	\end{equation}

\section{Relationships between dimensions}\label{sec3}
The main aim of this section is to demonstrate  that  modified lower box dimension, upper correlation dimension and packing dimension are all distinct, disproving a conjecture in \cite{Fr}.
\setcounter{equation}{0}
\setcounter{theo}{0}
\subsection{Inequalities}
Apart from the inclusion of  $\ucod E$ in the diagram below, the other  relationships are well-known and there are examples that show that sets $E$ exist with all possible values of these dimensions attained simultaneously, see for example, \cite{MO, Spe}. Here  $\dim_{\rm A }$ denotes Assouad dimension, and further relationships between Assouad type dimensions and lower dimensions may be found in \cite{FrBk}.

$$
\begin{array}{clccll}
    &\Nearrow  &  \lbd E &  \Searrow &\\
\hdd E\equiv\lcod E  \leq \mlbd E \!\!\! &  & && \!\!\! \ubd E  \leq \dim_{\rm A }E. \\
    &\Searrow \!\! & \ucod E \leq \pkd E \equiv \mubd E \!\!& \Nearrow &\\  
\end{array}
$$
Note that $\ucod E$ and $ \lbd E$ are not comparable.  Any countable set with positive box dimension will give an example such that $\ucod E< \lbd E$. On the other hand, Proposition \ref{example1} below exhibits a compact set such that  $ \ucod E>  \underline{\dim}_{MB} E  = \lbd  E$. Similarly, $\lbd E$ and $\pkd E$ are not comparable, see for example \cite{MO,Spe}.

The following proposition allows us to include upper correlation dimension in this picture.
\begin{prop}\label{ine}
For all Borel sets $E \subset \mathbb{R}^d$, 
\begin{equation}\label{ineqs}
\mlbd E \leq \ucod E \leq \pkd E,
 \end{equation}
\end{prop}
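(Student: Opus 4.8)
The plan is to prove the two inequalities $\mlbd E \le \ucod E$ and $\ucod E \le \pkd E$ separately; the second is the more routine of the two, so I would dispatch it first.

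For $\ucod E \le \pkd E$ I would read off a pointwise lower bound on the local upper dimension from the Frostman-type description \eqref{cd3} of $\ucod$ and feed it into Lemma \ref{re-packing-upper-local-dimension}. Fix any $s<\ucod E$. By \eqref{cd3} there is a measure $\mu\in\mathcal{P}(E)$ and a sequence $\{r_k\}_k\searrow 0$ with $\mu(B(x,r_k))\le r_k^{s}$ for every $x\in\R^d$. Since $\log r_k<0$, this forces $\log\mu(B(x,r_k))/\log r_k\ge s$ for every $k$, and hence $\limsup_{r\to0}\log\mu(B(x,r))/\log r\ge s$ for every $x\in E$. As $\mu(E)=1>0$, Lemma \ref{re-packing-upper-local-dimension} gives $\pkd E\ge s$, and letting $s\uparrow\ucod E$ yields $\ucod E\le\pkd E$. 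Equivalently, one can simply observe that any witnessing pair in \eqref{cd4} is automatically a witness for the last characterisation of $\pkd$ in \eqref{packingdimension}, since a bound holding at all cubes of every level $n_k$ holds in particular at $C_{n_k}(x)$ for each fixed $x$, i.e.\ for infinitely many $n$.

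For $\mlbd E \le \ucod E$ the heart of the matter is a comparison of quantifiers between the two Frostman-type characterisations, and here I would first treat compact $E$, where the dyadic form \eqref{mlbdim1} of Theorem \ref{MB} is available. Take $s<\mlbd E$, so by \eqref{mlbdim1} there is a set $F\subset E$ with the property that \emph{every} sequence $\{n_k\}_k\nearrow\infty$ admits a subsequence $\{n_{k_i}\}_i$ and a (probability) measure $\mu$ with $\spt\mu=F$ such that $\mu(C_{n_{k_i}})\le 2^{-n_{k_i}s}$ for all cubes $C_{n_{k_i}}\in\D_{n_{k_i}}$. I would specialise this universally quantified condition to the single sequence $n_k=k$: this produces one measure $\mu$ supported on $F\subset E$, hence $\mu\in\mathcal{P}(E)$, together with a sequence $\{n_{k_i}\}_i\nearrow\infty$ along which $\mu(C_{n_{k_i}})\le 2^{-n_{k_i}s}$ for every cube. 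This is exactly a witness for \eqref{cd4}, so $\ucod E\ge s$; letting $s\uparrow\mlbd E$ gives $\mlbd E\le\ucod E$ for compact $E$. The point is that the $\mlbd$-condition is logically stronger than the $\ucod$-condition — it demands a suitable measure against \emph{all} scale sequences, whereas $\ucod$ needs only one — so this direction of the comparison is essentially free.

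The main obstacle is extending $\mlbd E\le\ucod E$ from compact to general Borel $E$, since Theorem \ref{MB} (and hence \eqref{mlbdim1}) is stated only for compact sets. I would handle this by passing to compact subsets on both sides. On the $\ucod$ side this is safe: restricting any $\mu\in\mathcal{P}(E)$ to a compact $K\subset E$ carrying most of its mass and renormalising changes $\int\mu(B(x,r))\mathrm{d}\mu(x)$ only by a multiplicative constant, which is washed out in the $\limsup$ defining $\ucod$, so $\ucod E=\sup\{\ucod K:K\subset E\ \text{compact}\}$. The delicate part is the matching inner-regularity statement $\mlbd E\le\sup\{\mlbd K:K\subset E\ \text{compact}\}$; I expect this to be the crux, and would establish it by a local-to-global covering argument — reducing to $\sigma$-compact exhaustions of $E$ and amalgamating the compact covers that witness $\mlbd K<s$ for the individual pieces — after which the compact case above closes the proof.
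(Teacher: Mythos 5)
Your two core arguments are correct and are essentially the paper's own proof. For $\ucod E\le\pkd E$ the paper simply cites \eqref{cd3} together with \eqref{packingdimension}; your route through Lemma \ref{re-packing-upper-local-dimension} is the same observation one level down (that is how \eqref{packingdimension} is obtained), and your remark that a witness for \eqref{cd4} is automatically a witness for the last line of \eqref{packingdimension} is exactly the intended one-line comparison. For $\mlbd E\le\ucod E$ the paper just chains the second identity of Theorem \ref{MB} through the form $\ucod E=\sup_{\mu}\limsup_{r\to0}\inf_{x}\log\mu(B(x,r))/\log r$; your quantifier specialisation (fix the single sequence $n_k=k$, extract the promised subsequence and measure, feed it into \eqref{cd4}) is the same argument made explicit, and is sound -- the measure produced by Theorem \ref{MB} is a probability measure, and normalisation would in any case only cost a constant that vanishes in the logarithmic limit.

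The one substantive issue is the compact-versus-Borel point you raise at the end. You are right that Theorem \ref{MB} is stated only for compact $E$ while Proposition \ref{ine} claims all Borel $E$; the paper's proof silently elides this. But your proposed repair does not work as sketched: a Borel set need not be $\sigma$-compact (the irrationals, for instance), so there is no ``$\sigma$-compact exhaustion of $E$'' to amalgamate over, and the statement you actually need, namely the inner-regularity inequality $\mlbd E\le\sup\{\mlbd K: K\subset E\ \text{compact}\}$, is not a formal consequence of the definition of $\mlbd$ (whose covering sets $E_i$ need not lie inside $E$). Establishing it would require an analytic-approximation argument in the spirit of Cutler's Lemma 2.5, which neither you nor the paper supplies. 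Your reduction on the $\ucod$ side is fine for the direction needed, since restricting a witnessing measure to a compact subset of positive measure only improves the bound on $\int\mu(B(x,r_k))\,\mathrm{d}\mu(x)$ up to a multiplicative constant. In short: your proof is complete for compact $E$ and coincides with the paper's; the passage to general Borel $E$ is a genuine gap in your write-up, albeit one the paper shares.
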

\begin{proof}
 According to Theorem \ref{MB},  it is clear that
		\begin{equation*}
			\mlbd E\leq \inf_{\{r_k\}_k\searrow 0}  \sup_{\mu\in\mathscr{P}(E)}\limsup_{k\to\infty}\inf_{x\in E}\frac{\log\mu(B(x,r_k))}{\log r_k}\leq\ucod E.
		\end{equation*}
		The second inequality in \eqref{ineqs} follows directly from \eqref{cd3} and \eqref{packingdimension}.
\end{proof}

We next present examples to show that strict inequality is possible in the inequalities \eqref{ineqs} in a wide sense; in particular the three dimensions in \eqref{ineqs} are essentially different. In what follows, by slight abuse of notation, given a family of sets $\mathcal{D}$ and a set $C$, we write $\mathcal{D} \cap  C $ for the sets in $\mathcal{D}$ that are subsets of $C$.

\begin{prop}\label{example1}
Given $0<t<s<1$, there exists a compact set $E\subset \bbbr^n$ such that $\mlbd E=\lbd E  = t$  and $\ucod E=\pkd E =s$.
\end{prop}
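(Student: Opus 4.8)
The plan is to construct a spatially homogeneous $b$-adic Cantor set in which the ``density of subdivisions'' oscillates, so that lower box dimension settles at $t$ and upper box dimension at $s$, and then to exploit homogeneity to transfer these values to the modified and correlation dimensions. Fix an integer base $b\ge 2$ and work inside $[0,1]$, regarded as $[0,1]\times\{0\}^{n-1}\subset\R^n$. I would build $E$ as a nested intersection of unions of half-open $b$-adic intervals: passing from level $m$ to $m+1$ I declare the level \emph{full} or \emph{sparse}; in a full level every surviving interval of length $b^{-m}$ retains all $b$ of its children, while in a sparse level every surviving interval retains a single fixed child. Writing $S_m$ for the number of full levels among the first $m$ and $f_m=S_m/m$, each surviving level-$m$ interval then carries identical structure, there are $M_m=b^{S_m}$ of them, and $N_{b^{-m}}(E)\asymp M_m$. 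Computing box dimension along the geometric scales $b^{-m}$ via \eqref{bdimdef} gives $\lbd E=\liminf_m f_m$ and $\ubd E=\limsup_m f_m$. I would then choose the full/sparse pattern in blocks of rapidly increasing length, using a proportion $\approx s$ of full levels in one block and $\approx t$ in the next, so that $f_m$ approaches $s$ at the ends of the first kind of block and $t$ at the ends of the second; this standard device prescribes $\liminf_m f_m=t$ and $\limsup_m f_m=s$ exactly, yielding $\lbd E=t$ and $\ubd E=s$.

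The second step pins the modified dimensions. By homogeneity, for every surviving level-$m$ interval $C$ the piece $E\cap C$ is, up to the scaling $b^{-m}$, the Cantor set built from the shifted pattern, so its full-level density at relative level $j$ equals $(S_{m+j}-S_m)/j$. Since $\tfrac{m+j}{j}\to1$ and $S_m/j\to0$, this quantity has the same $\liminf$ and $\limsup$ in $j$ as $f_k$ does in $k$, whence $\lbd(E\cap C)=t$ and $\ubd(E\cap C)=s$ for \emph{every} cylinder $C$. Now given any cover $E\subset\bigcup_iE_i$ by compact sets, the sets $E_i\cap E$ are compact and cover the compact space $E$, so by Baire's category theorem (exactly as in the proof of Theorem \ref{MB}) one of them, say $E_{i_0}\cap E$, has nonempty interior relative to $E$ and hence contains a cylinder $E\cap C$. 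Monotonicity of box dimension gives $\lbd E_{i_0}\ge t$ and $\ubd E_{i_0}\ge s$, so $\sup_i\lbd E_i\ge t$ and $\sup_i\ubd E_i\ge s$ for every such cover. Combined with the trivial reverse bounds this gives $\mlbd E=t$ and, via \eqref{mpddef}, $\pkd E=\mubd E=s$.

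Finally I would show $\ucod E\ge s$, which together with $\ucod E\le\pkd E=s$ from Proposition \ref{ine} forces $\ucod E=s$. Let $\mu$ be the natural uniform measure on $E$, so each surviving level-$m$ interval carries mass $b^{-S_m}$. Taking $m_k$ to be the endpoints of the full-proportion blocks, one has $f_{m_k}\to s$, and since a ball $B(x,b^{-m_k})$ meets at most $3$ level-$m_k$ intervals, $\mu(B(x,b^{-m_k}))\le 3\,b^{-S_{m_k}}=3\,b^{-m_kf_{m_k}}\le (b^{-m_k})^{s'}$ for every $s'<s$ and all large $k$. By characterisation \eqref{cd3} this gives $\ucod E\ge s'$ for all $s'<s$, hence $\ucod E=s$.

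I expect the main obstacle to be the second step: verifying that homogeneity forces every cylinder to retain the full lower box dimension $t$ (the delicate point being that the tail of the construction has the same $\liminf$ of densities as the whole, despite the oscillation), and then using Baire's theorem to prevent any countable compact decomposition from lowering the modified lower box dimension below $t$. Controlling $\lbd$ under decomposition is precisely the phenomenon that makes $\mlbd$ subtle, so this is where the argument must be watertight; by contrast the block construction realising the prescribed $\liminf$ and $\limsup$ and the counting bound for $\ucod$ are routine.
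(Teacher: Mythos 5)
Your proposal is correct and follows essentially the same route as the paper: a homogeneous dyadic-type Cantor construction with branching density oscillating between $t$ and $s$ over rapidly lengthening blocks, a Baire category argument showing every countable compact cover must contain a relative cylinder of full lower box dimension $t$, and the natural uniform measure (which is exactly the weak limit the paper constructs) evaluated along the scales where the density approaches $s$ to get $\ucod E\ge s$ via \eqref{cd3}, combined with $\ucod E\le\pkd E\le\ubd E=s$. The only point needing the same care the paper takes is arranging the full levels within each block so that $f_m\le s+o(1)$ for \emph{all} $m$, not merely at block ends, so that $\ubd E$ does not exceed $s$.
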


	\begin{proof}
		Fix positive real numbers $0<t<s<1$. Let $\{\epsilon_k\}_k\subset (0,t)$ be a sequence of real numbers  decreasing to $0$. Let  $n_0=0$ and $n_1=1$.  We define a sequence of integers $\{n_m\}_m$ recursively.
		 For $k\geq1$, let
		\begin{equation}\label{n2k}
			n_{2k}=\bigg\lfloor\frac{1}{t-\epsilon_k}\sum_{i=0}^{k-1}(n_{2i+1}-n_{2i})  \bigg\rfloor+1
		\end{equation}	
		and 
		\begin{equation}\label{n2k1}
			n_{2k+1}=\bigg\lfloor\frac{1}{1-s}\Big(n_{2k}-\sum_{i=0}^{k-1}(n_{2i+1}-n_{2i})\Big)\bigg\rfloor,
		\end{equation}	
	 where $\lfloor c\rfloor$ is  the integral part of the real number $c$.
		 Define $\mathcal{D}_n$ as the collection of all half-open dyadic intervals of length  $2^{-n}$  that are contained in $[0,1]$, which we refer to as $n$th-level intervals, so 
		\begin{equation*}
			\mathcal{D}_n=\bigg\{\Big(\frac{k}{2^n},\frac{k+1}{2^n}\Big]:k=0,1,\cdots,2^n-1\bigg\}.
		\end{equation*}

Beginning with $\mathcal{C}_0=[0,1]$, for all   $n\in\mathbb{N}$ we will select dyadic intervals from $\mathcal{D}_n$, and  denote the set of all such chosen intervals as $\mathcal{C}_n$. 
		For each $n$th-level interval chosen for $\mathcal{C}_n$,  we ensure that at least one of  its $(n+1)$th-level subintervals will be chosen for $\mathcal{C}_{n+1}$. 
		 Thus at the $(n+1)$th stage, we either choose the lefthand $(n+1)$th-level subinterval of  each interval in $\mathcal{C}_n$ (referred to as {\it Method 1})  or we choose both	$(n+1)$th-level subintervals of  each  interval in $\mathcal{C}_n$ (referred to as  {\it Method 2}).
		 
		At the first stage ($n_1=1$), we adopt Method 2. Thus
		\begin{equation*}
			\textstyle{\mathcal{C}_{n_1}=\big\{(0,\frac{1}{2}],(\frac{1}{2},1]\big\}.}
		\end{equation*}
		We alternate between using Method 1 and Method 2 throughout the construction process.
		At the $m$th stage, we use Method 1 when $n_{2k-1}<m\leq n_{2k}$ for an integer $k$,  and Method 2 when $n_{2k}<m\leq n_{2k+1}$.
			Define
		\begin{equation*}
			E_n=\bigcup_{C_n\in \mathcal{C}_n}C_n \quad\textit{and}\quad E=\bigcap_{n =1}^\infty \overline{E_n}\, ,
		\end{equation*}
		where $\overline{E_n}$ is the closure of $E_n$.
		
		Clearly, when $n_{2k-1}\leq m\leq n_{2k}$,
		\begin{equation*}
			\#(\mathcal{C}_{n_{2k}}\cap C_{m})=\left\{
			\begin{aligned}
				&1 \quad&\text{if } C_{m}\in\mathcal{C}_{m};\\
				&0 \quad&\text{if } C_{m}\notin\mathcal{C}_{m}.
			\end{aligned}\right.
		\end{equation*}
		Thus 
		\begin{equation*}
			\#\mathcal{C}_{n_{2k}}=\prod_{i=0}^{k-1}2^{n_{2i+1}-n_{2i}}.
		\end{equation*}
		By \eqref{n2k}, 
		\begin{equation}\label{2k}
			2^{n_{2k}(t-\epsilon_k)-1}\leq\#\mathcal{C}_{n_{2k}}<2^{n_{2k}(t-\epsilon_k)}
		\end{equation}
		When $n_{2k}\leq m\leq n_{2k+1}$,
		\begin{equation*}
			\#(\mathcal{C}_{n_{2k+1}}\cap C_{m})=\left\{ 
			\begin{aligned}
				&2^{n_{2k+1}-m} \quad&\text{if } C_{m}\in\mathcal{C}_{m};\\
				&0 \quad&\text{if } C_{m}\notin\mathcal{C}_{m}.
			\end{aligned}\right.
		\end{equation*}
		Consequently,
		\begin{equation*}
			\#\mathcal{C}_{n_{2k+1}}=\prod_{i=0}^{k}2^{n_{2i+1}-n_{2i}}.
		\end{equation*}
		It follows from \eqref{n2k1} that
		\begin{equation}\label{2k+1}
			2^{n_{2k+1}s-1}<\#\mathcal{C}_{n_{2k+1}}\leq2^{n_{2k+1}s}.
		\end{equation}
		
		It follows directly from \eqref{2k} that $\lbd E\leq t$.
		We next show that $\mlbd E\geq t$.
		Let $\epsilon>0$ and
		let $E=\bigcup_i E_i$ be an arbitrary countable cover of $E$ with each  $E_i$  closed.
		Since $E$ is compact, by Baire's category theorem there  exists an  $E_i$ such that $E \cap E_i$ has non-empty interior relative to $E$.
		Thus, for some $n$, there exists a $n$th-level interval $C_n$ such that $E\cap C_n\subset  E_i$.
		Since $E\cap C_n\neq\emptyset$, it follows that for $m>n$,
		\begin{align*}
			N_{2^{-m}}(E_i)&\geq N_{2^{-m}}(E\cap C_n)\\
			&\geq\frac{\#\mathcal{C}_m}{\#\mathcal{C}_n}\\
			&=\frac{1}{\#\mathcal{C}_n}\begin{cases}
				\#\mathcal{C}_{n_{2k}}, & \text{if } n_{2k-1}<m\leq n_{2k}\\
					\#\mathcal{C}_{n_{2k}}\cdot 2^{m-n_{2k}}, & \text{if } n_{2k}<m\leq n_{2k+1}
			\end{cases}\\
			&\geq\frac{1}{\#\mathcal{C}_n}\begin{cases}
				2^{n_{2k}(t-\epsilon_k)-1}, & \text{if } n_{2k-1}<m\leq n_{2k}\\
				2^{n_{2k}(t-\epsilon_k)-1+(m-n_{2k})}, & \text{if } n_{2k}<m\leq n_{2k+1}
			\end{cases}\quad\big(\text{by } \eqref{2k}\big)\\
			&\geq\frac{1}{2\#\mathcal{C}_n}2^{m(t-\epsilon)},
		\end{align*}
		where the last inequality is valid if $m$ is large enough so that the corresponding $k$ satisfies $\epsilon_k<\epsilon$.
		Thus, $\lbd E_i\geq t-\epsilon$. According to the definition of the modified lower box dimension and the arbitrariness of $\epsilon$, we conclude $\mlbd E\geq t$
		and thus
		\begin{equation*}
			\mlbd E=\lbd E= t.
		\end{equation*}

		We next  verify that  $\ucod (E)= \pkd E = s$. For each $m\in \mathbb{N}$, define a measure 
		\begin{equation*}
			\mu_m= \sum_{C\in\mathcal{C}_{n_{2m+1}}}\frac{1}{\#\mathcal{C}_{n_{2m+1}}}\frac{\mathcal{L}|_C}{\mathcal{L}(C)},
		\end{equation*}
		where $\mathcal{L}$ is Lebesgue measure.
		For  $C_{n_{2k+1}}\in\mathcal{C}_{n_{2k+1}}$ and $k\leq m$, 
			\begin{align}\label{mumc}
			\mu_m(C_{n_{2k+1}})
			=&\ \#(\mathcal{C}_{n_{2m+1}}\cap C_{n_{2k+1}})\cdot\frac{1}{\#\mathcal{C}_{n_{2m+1}}}\notag\\
			=&\ \frac{1}{\#\mathcal{C}_{n_{2k+1}}}\notag\\
			\leq&\ 2^{-n_{2k+1}s+1}\quad\big(\text{by } \eqref{2k+1}\big).
		\end{align}	
		
		Let $\{\mu_{m_j}\}_j$ be a weakly convergent subsequence of $\{\mu_m\}_m$, and let $\mu$ be the limit of this subsequence.
		It is easy to see that $\mu\in\mathcal{P}(E)$.
		For each $x\in E$, $x$ lies in the interior of the union of two ${n_{2k+1}}$th adjacent dyadic intervals, denoted as $U_k(x)$.
			Thus
		\begin{align*}
			\mu(\{x\})\leq&\ \mu(U_k(x))\\
		\leq&\ \liminf_m\mu_m(U_k(x))\\
		\leq&\  2^{-n_{2k+1}s+2}\quad\big(\text{by } \eqref{mumc}\big).
		\end{align*}	
			Letting  $k$ tend to infinity  implies that $\mu$ has no atoms.
			We denote the interior of  ${C}_{n_{2k+1}}$ by ${\rm int }\,{C}_{n_{2k+1}}$ and its boundary by  $\partial C_{n_{2k+1}}$. Since $\mu(\partial C_{n_{2k+1}})=0$, 
		\begin{align*}
			\mu(C_{n_{2k+1}})\leq&\ \mu(\partial C_{n_{2k+1}})+\mu({\rm int }\,{C}_{n_{2k+1}})\\
			\leq&\ \liminf_m\mu_{m}({\rm int }\,{C}_{n_{2k+1}})\\
			\leq&\ 2^{-n_{2k+1}s+1}\quad\big(\text{by } \eqref{mumc}\big).
		\end{align*}	
By Proposition \ref{cordim},  $\ucod E\geq s$.
		
		If $\pkd  E> s$, then there exists $\epsilon_0>0$ such that $\ubd(E)\geq\pkd E> s+\epsilon_0$.
		Then  there exists a sequence of integers $\{m_k\}_k$ such that
		\begin{equation}\label{contra}
			\#\mathcal{C}_{m_k}\geq 2^{m_k(s+\epsilon_0)}.
		\end{equation}
		However, for each $m\in\mathbb{N}$, 
			\begin{align*}
			\#\mathcal{C}_m&=\begin{cases}
				\#\mathcal{C}_{n_{2k-1}}, & \text{if } n_{2k-1}<m\leq n_{2k}\\
				\#\mathcal{C}_{n_{2k+1}}\cdot 2^{m-n_{2k+1}}, & \text{if } n_{2k}<m\leq n_{2k+1}
			\end{cases}\\
			&\leq\begin{cases}
				2^{n_{2k-1}s}, & \text{if } n_{2k-1}<m\leq n_{2k}\\
				2^{n_{2k+1}s+m-n_{2k+1}}, & \text{if } n_{2k}<m\leq n_{2k+1}
			\end{cases}\quad\big(\text{by } \eqref{2k+1}\big)\\
			&\leq 2^{ms},
		\end{align*}
		which contradicts  \eqref{contra}.
		Thus $$\ucod E= \pkd E =s.$$
	\end{proof}

The next example distinguishes  upper correlation dimension and packing dimension.

\begin{prop}\label{propeq}
Given $s,t$ with $0<t<s<1$, there exists a compact set $E$ such that $\ucod E=t$ and  $ \pkd E=s$.
\end{prop}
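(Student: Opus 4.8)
The plan is to build $E$ inside the unit interval (viewed in $\R^n$) by superimposing a homogeneous ``background'' Cantor set of dimension $t$ with a sparse sequence of ``spikes'' that momentarily raise the box dimension to $s$ but which no single measure can exploit. Working with dyadic cubes, at each level I select a family $\mathcal{C}_m$: by default every surviving cube branches at rate $t$ (keeping about $2^{(\ell-m)t}$ of its dyadic descendants $\ell$ levels down), so off the spikes $E$ behaves like a $t$-regular set. At a rapidly increasing sequence of stages $k$ I choose a single surviving cube $C^*_k$ of side $2^{-N_k}$ and fill it solidly down to level $M_k:=N_k/(1-s)$, creating $2^{M_k-N_k}=2^{M_ks}$ cubes of side $2^{-M_k}$ inside $C^*_k$; below level $M_k$ its descendants revert to rate $t$. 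I take the levels to grow so fast that $M_k\gg M_{k-1}$, and I schedule the $C^*_k$ so that every surviving cube contains $C^*_k$ for infinitely many $k$.

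First I would record the box-counting profile. A direct count gives $\#\mathcal{C}_m\le 2^{mt+M_{k-1}(s-t)}$ for $m$ in the $k$-th inter-spike block, with the exponent equal to $ms$ exactly at each peak $m=M_k$ and decreasing back towards $t$ in between; hence $\ubd E=s$ and so $\pkd E=\mubd E\le s$. For the reverse, since every surviving cube $C$ contains spikes $C^*_{k_i}$ at arbitrarily fine scales, $N_{2^{-M_{k_i}}}(E\cap C)\ge 2^{M_{k_i}s}$ along $k_i\to\infty$, so $\ubd(E\cap C)=s$ for each such $C$; a Baire category argument on the compact set $E$, as in the proof of Theorem \ref{MB}, then forces $\sup_i\ubd E_i\ge s$ for every countable closed cover, giving $\pkd E\ge s$ via \eqref{mpddef}. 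For the lower bound on the correlation dimension I would use the natural rate-$t$ measure $\mu_0$ (spreading mass at rate $t$ everywhere and uniformly across each solid spike): a short scale-by-scale estimate shows $\mu_0(B(x,r))\le Cr^t$ for all $x,r$, so $\mu_0$ is $t$-Frostman, $\mathcal{H}^t(E)>0$ and $\hdd E\ge t$. Since $\ucod E\ge\lcod E=\hdd E$ by \eqref{haucor}, this yields $\ucod E\ge t$.

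The heart of the argument is the upper bound $\ucod E\le t$. Using that surviving cubes (including those filling a spike) are met by boundedly many others within distance $2^{-m}$, I first note $\int\mu(B(x,2^{-m}))\,\mathrm{d}\mu(x)\asymp S_m(\mu):=\sum_{C\in\mathcal{C}_m}\mu(C)^2$, so by \eqref{cd2} it suffices to show $\limsup_m\big(\log S_m(\mu)/(-m\log2)\big)\le t$ for every $\mu\in\mathcal{P}(E)$. Fix $\delta>0$ and a scale $m$ in the $k$-th block; outside the dominant spike cube $C^*_k$ the number of level-$m$ cubes is at most $2^{mt+M_{k-1}(s-t)}$, so restricting $S_m(\mu)$ to those cubes and applying Cauchy--Schwarz gives
\begin{equation*}
S_m(\mu)\ \ge\ \big(1-\mu(C^*_k)\big)^2\,2^{-mt-M_{k-1}(s-t)} .
\end{equation*}
Since $m\ge M_k\gg M_{k-1}$ the correction $M_{k-1}(s-t)/m\to0$, so either $\mu(C^*_k)\to1$ or the ratio at scale $m$ is at most $t+o(1)$. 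Hence the ratio exceeding $t+\delta$ for infinitely many $m$ forces $\mu(C^*_{k(m)})\to1$ along infinitely many \emph{distinct} spike cubes, because each spike dominates only its own finite block of scales.

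It remains to contradict $\mu(C^*_{k_j})\to1$ for infinitely many distinct dyadic cubes. As any infinite family of dyadic cubes contains an infinite antichain or an infinite nested chain, I split into two cases: an antichain gives disjoint cubes of eventual mass $>\tfrac12$, impossible; a nested chain $C^*_{k_1}\supsetneq C^*_{k_2}\supsetneq\cdots$ forces $\mu$ to carry an atom of mass $1$ at the limiting point $p$, whence $\int\mu(B(x,r))\,\mathrm{d}\mu(x)\ge\mu(\{p\})^2=1$ for all $r$ and the ratio tends to $0$, contradicting that it exceeds $t+\delta$ infinitely often. Thus for every $\mu$ the ratio exceeds $t+\delta$ only finitely often, so $\ucod\mu\le t$ and $\ucod E\le t$, completing $\ucod E=t$ and $\pkd E=s$. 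The main obstacle is exactly this balance: the single-cube spikes must be scheduled densely enough that packing dimension stays $s$ on every piece, yet with inter-spike gaps growing fast enough that exploiting any spike drives essentially all the mass into one small cube---so that the chain/antichain dichotomy defeats every measure at once.
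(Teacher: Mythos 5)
Your construction and overall strategy are essentially those of the paper: your spike cubes $C_k^*$ play exactly the role of the paper's designated intervals $\widetilde{C}_{N_k}$, your scheduling so that every surviving cube contains infinitely many spikes is the paper's left-to-right sweep of designated intervals, and your Baire-category argument for $\pkd E\geq s$ is the same. Two genuine differences in execution are worth recording. First, your background branches at rate $t$, so $\hdd E\geq t$ (hence $\ucod E\geq t$) comes for free from the natural Frostman measure; the paper instead keeps a single child outside the designated interval and restores the lower bound only at the end, by taking a union with an auxiliary set $E_0$ with $\hdd E_0=\pkd E_0=t$. Your version is cleaner on this point, at the cost of a slightly thicker background to control in the upper bound. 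Second, for $\ucod E\leq t$ you run an $L^2$/Cauchy--Schwarz argument on $S_m(\mu)=\sum_C\mu(C)^2$ forcing $\mu(C^*)\to1$ along infinitely many distinct spikes, and then dispose of this via the chain/antichain dichotomy (disjointness versus a unit atom); the paper instead uses the pointwise characterisation \eqref{cd4} together with its ``lower-count ranges'' and the observation that for two disjoint designated intervals of positive measure, every scale lies in the lower-count range of at least one. These are two bookkeepings of the same mass-concentration phenomenon, and both are valid; your atom case corresponds to the paper's reduction to ``$\mu$ is not a Dirac measure''.

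One step needs tightening. The inequality $S_m(\mu)\geq\big(1-\mu(C_k^*)\big)^2\,2^{-mt-M_{k-1}(s-t)}$ is correct, but the assertion ``since $m\geq M_k\gg M_{k-1}$ the correction $M_{k-1}(s-t)/m\to0$'' does not cover all scales in the block: for $M_{k-1}<m\lesssim M_{k-1}(s-t)/\delta$ the correction is not small, and at those scales the single cube whose removal leaves only $2^{m(t+o(1))}$ level-$m$ cubes is the \emph{previous} spike $C_{k-1}^*$ (whose solid filling still dominates the count), not $C_k^*$. The dominant spike $C^*_{k(m)}$ should therefore be taken to be $C_{k-1}^*$ for $M_{k-1}<m\leq N_k$ and $C_k^*$ for $N_k<m\leq M_k$; with $N_k\gg M_{k-1}\gg M_{k-2}$ the correction is then $o(m)$ in every case, and since each spike can dominate only finitely many scales, the rest of your argument (infinitely many distinct spikes with mass tending to $1$, then chain versus antichain) goes through unchanged. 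This is a repair of indexing, not of the idea, so I regard the proposal as correct.
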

		\begin{proof}
		Set $N_0=1$,  and  for $k\geq1$, define 
		\begin{equation*}\label{nk}
			n_k=\max\Big\{\Big\lfloor\frac{N_{k-1}}{1-s}\Big\rfloor,\ \Big\lceil\frac{t}{s-t}\Big\rceil\Big\}  \text{ and } N_k=\max\Big\{\Big\lfloor\frac{sn_k}{t}\Big\rfloor,\  \Big\lceil\frac{1-s}{s}\Big\rceil\Big\},
		\end{equation*} 
		so $n_k < N_k <n_{k+1}$. Then for all $k\geq2$,
		\begin{equation}\label{nka}
		s n_k- (1-s) < n_k-N_{k-1} \leq s n_k
		\end{equation} 
		and
		\begin{equation}\label{nkb}
			sn_k-t<N_kt\leq sn_k.
		\end{equation}
		
		Let $C_0=[0,1]$. 
		We  let $\mathcal{D}_n$ denote the family of  half-open $n$th-level dyadic intervals of length $2^{-n}$ that are contained in $C_0$. 
		We select   $n$th-level dyadic intervals from  $\mathcal{D}_n$ in a specific way and denote the set of chosen intervals by $\mathcal{C}_n$. We also {\it designate} certain intervals, which we denote by a tilde.
		For $n=1$, we select all   intervals in $\mathcal{D}_1$, i.e., 
		$$\mathcal{C}_1=\mathcal{D}_1=\big\{(0,\textstyle{\frac{1}{2}}],(\textstyle{\frac{1}{2}},1]\big\}.$$ 
		Designate the interval  $\widetilde{C}_{1}=\big(\mathcal{C}_1\big)_L=(0,\frac{1}{2}]$, where  $\big(\mathcal{C}\big)_L$ denotes
		the  left-most interval in the   interval collection $\mathcal{C}$.
		 For $1<n\leq n_1$,  select all  $n$th intervals from  $\mathcal{D}_n\cap \widetilde{C}_{1}$.
		Simultaneously, from   $\mathcal{D}_n\cap (\frac{1}{2},1]$, we select only the  left-most interval $\big(\mathcal{D}_n\cap (\frac{1}{2},1]\big)_L$. 
		Thus $$\mathcal{C}_{n_1} =\big(\mathcal{D}_{n_1}\cap \widetilde{C}_{1}\big)\cup\big(\mathcal{D}_{n_1}\cap (\textstyle{\frac{1}{2}},1]\big)_L;$$ 
		For $n_1<n\leq N_1$ and  each $C_{n_1}\in\mathcal{C}_{n_1}$,  from $\mathcal{D}_n\cap C_{n_1}$  we select only the  left-most interval $\big(\mathcal{D}_n\cap C_{n_1}\big)_L$.
		Then
		\begin{equation*}
			\mathcal{C}_{N_1}=\Big\{\big(\mathcal{D}_{N_1}\cap {C}_{n_1}\big)_L:{C}_{n_1}\in\mathcal{C}_{n_1}\Big\}.
		\end{equation*}
		Designate  the interval  $\widetilde{C}_{N_1}=\big(\mathcal{C}_{N_1}\big)_L^{\widetilde{C}_{1}}=\big(\mathcal{D}_{N_1}\cap (\frac{1}{2},1]\big)_L$,
		where $\big(\mathcal{C}\big)_L^C$ denotes the  left-most interval  in the interval collection $\mathcal{C}$  to the right of the interval $C$.
		For $N_1<n\leq n_2$, we select all  $n$th-level intervals from $\mathcal{D}_n\cap \widetilde{C}_{N_1}$, and for each $C_{N_1}\in\mathcal{C}_{N_1}\setminus\{\widetilde{C}_{N_1}\}$, we only  select $\big(\mathcal{D}_n\cap {C}_{N_1}\big)_L$. Then we define  
		\begin{equation*}
			\mathcal{C}_{n_2}=\big(\mathcal{D}_{n_2}\cap \widetilde{C}_{N_1}\big)\cup\Big\{\big(\mathcal{D}_{n_2}\cap {C}_{N_1}\big)_L:{C}_{N_1}\in\mathcal{C}_{N_1}\setminus\{\widetilde{C}_{N_1}\}\Big\};
		\end{equation*}
		For $n_2<n\leq N_2$  and    each $C_{n_2}\in \mathcal{C}_{n_2}$, we  select only  $\big(\mathcal{D}_n\cap {C}_{n_2}\big)_L$.
	Consequently,
		\begin{equation*}
			\mathcal{C}_{N_2}=\Big\{\big(\mathcal{D}_{N_2}\cap {C}_{n_2}\big)_L:{C}_{n_2}\in\mathcal{C}_{n_2}\Big\}.
		\end{equation*}
		
For $m\geq2$, if $\widetilde{C}_{N_{m-1}}$ is the right-most interval in $\mathcal{C}_{N_{m-1}}$ we define $\widetilde{C}_{N_{m}}=\big(\mathcal{C}_{N_{m}}\big)_L$, otherwise    
$\widetilde{C}_{N_{m}}=\big(\mathcal{C}_{N_{m}}\big)_L^{\widetilde{C}_{N_{m-1}}}$	.	
For $N_{m}<n\leq n_{m+1}$, we form $\mathcal{C}_{n}$ by selecting all  $n$th-level intervals from $\mathcal{D}_n\cap \widetilde{C}_{N_m}$, but for each $C_{N_m}\in\mathcal{C}_{N_m}\setminus  \{\widetilde{C}_{N_m}\} $ we only  select $\Big(\mathcal{D}_n\cap {C}_{N_m}\Big)_L$. 
		That is,
		\begin{equation*}
			\mathcal{C}_{n}=\Big(\mathcal{D}_{n}\cap \widetilde{C}_{N_m}\Big)\cup\Big\{\Big(\mathcal{D}_{n}\cap {C}_{N_m}\Big)_L:{C}_{N_m}\in\mathcal{C}_{N_m}\setminus \{\widetilde{C}_{N_m}\}\Big\}.
		\end{equation*}	
			For $n_{m+1}<n\leq N_{m+1}$, we  select only $\big(\mathcal{D}_n\cap {C}_{n_{m+1}}\big)_L$ for each $C_{n_{m+1}}\in \mathcal{C}_{n_{m+1}}$.
		That is,
		\begin{equation*}
			\mathcal{C}_{n}=\Big\{\big(\mathcal{D}_{n}\cap {C}_{n_{m+1}}\big)_L:{C}_{n_{m+1}}\in\mathcal{C}_{n_{m+1}}\Big\}.
		\end{equation*}	
		By iterating this process, we generate a sequence of  families of sets $\{\mathcal{C}_{n}\}_n$. Define $$E=\bigcap_{n}\bigcup_{C_n\in\mathcal{C}_{n}} \overline{C_n}.$$
		
		It is clear that $\#\mathcal{C}_{N_m}=\#\mathcal{C}_{n_m}$ and $\#\mathcal{C}_{n_m}-\#\mathcal{C}_{N_{m-1}}=2^{{n_m}-N_{m-1}}-1$. It follows that for $k\geq2$,
		\begin{align}
			\#\mathcal{C}_{n_k}=&\sum_{m=1}^k\big(\#\mathcal{C}_{n_m}-\#\mathcal{C}_{N_{m-1}}\big)+\#\mathcal{C}_{1}\notag\\
			=&\sum_{m=1}^k\big(2^{{n_m}-N_{m-1}}-1\big)+2\notag\\
			\leq&k2^{n_ks}\quad\text{\big(by \eqref{nka}\big)}.\label{cnk}
		\end{align}
		For each $k\in\mathbb{N}$, define
		\begin{equation*}
			\mathcal{J}_k=\big\{j:\widetilde{C}_{N_j}\subset  \widetilde{C}_{N_k}\big\}.
		\end{equation*}
		By virtue of the construction of these sets,  $\mathcal{J}_k$   is formed of  countably many  disjoint segments of consecutive integers, which we can  write as 
		\begin{equation*}
			\mathcal{J}_k=\{{j_1},j_1+1,\cdots,j_1+l_1, j_2,j_2+1,\cdots,j_2+l_2,\cdots\},
		\end{equation*}
		where $j_i$ and $j_i+l_i$ are the end points of these segments. For $k\geq2$, we see that $j_{i+1}-(j_i+l_i)>2$.
		If $N_j\leq m\leq n_{j+1}$ with $j\in\mathcal{J}_k$, then
		\begin{align}\label{m1}
		\#\big(\mathcal{C}_m\cap \widetilde{C}_{N_k}\big)&\leq\#\mathcal{C}_{N_j}+2^{m-N_j}\notag\\
		&\leq j2^{n_js}+2^{m-N_j}\quad (\text{by \eqref{cnk}})\notag\\
		&\leq j2^{ms}+2^{ms}\quad (\text{by \eqref{nka}})\notag\\
		&\leq m 2^{ms}.
		\end{align}
		If $n_{j+1}\leq m\leq N_{j+1}$, then 
		\begin{align}\label{m2}
			\#\big(\mathcal{C}_{m}\cap \widetilde{C}_{N_k}\big)
			\leq\#\mathcal{C}_{n_{j+1}}\leq(j+1)2^{n_{j+1}s}
				\leq m2^{ms}.
		\end{align}
		 In particular, when $N_{j_i+l_i+1}\leq m\leq N_{j_{i+1}}$, 
		\begin{align}\label{less}
			\#(\mathcal{C}_m\cap\widetilde{C}_{N_k} )\leq&		\#\mathcal{C}_{n_{j_i+l_i+1}}\notag\\
		\leq&(j_i+l_i+1)2^{n_{j_i+l_i+1}s}\quad\text{\big(by \eqref{cnk}\big)}\notag\\
		\leq& (j_i+l_i+1)2^{N_{j_i+l_i+1}t+1}\quad\text{\big(by \eqref{nkb}\big)}\notag\\
		\leq& m2^{mt+1}.
		\end{align}
		We call $[N_{j_i+l_i+1},N_{j_{i+1}}]$ a {\it lower-count range}  of $\widetilde{C}_{N_k}$.
		
		We can now show by contradiction that $\ucod E\leq t$.  For if   $\ucod E > t+\epsilon_0$ for some $\epsilon_0>0$,   \eqref{cd4} implies that there exist a measure $\mu\in\mathcal{P}(E)$ and a sequence $\{m_l\}_l$  such that for every  designated interval  $\widetilde{C}_{N_k}$ with positive measure,  
		\begin{equation}\label{greater}
			\mu( \widetilde{C}_{N_k})\leq\#(\mathcal{C}_{m_l}\cap \widetilde{C}_{N_k})\cdot2^{-m_l(t+\epsilon_0)}.
		\end{equation}
We note that if $m$ is not in the lower-count range of $\widetilde{C}_{N_k}$ and $m>N_k$, then $N_{j_i}<m<N_{j_i+l_i+1}$ for some $j_i\in\mathcal{J}_k$.
	Suppose that $\widetilde{C}_{N_{k'}}\cap \widetilde{C}_{N_{k}}=\emptyset$ and $m>N_{k'}$. As $\mathcal{J}_{k'}\cap \mathcal{J}_k=\emptyset$,  it is easy to check that $m$ lies in a lower-count range of $\widetilde{C}_{N_{k'}}$.  Since $\mu$ is not a Dirac measure, we can find two   disjoint designated intervals $ \widetilde{C}_{N_{k}}$ and $\widetilde{C}_{N_{k'}}$  with positive measure. 
	If $m_l$ lies in the  lower-count range of $ \widetilde{C}_{N_{k}}$  for at most finitely many $l$,  then there are infinitely many $l$ such that $m_l$  are in the lower-count range of $ \widetilde{C}_{N_{k'}}$. 
	Combining \eqref{less} and \eqref{greater}, there are infinitely many $m_l$ such that 
	$$\mu( \widetilde{C}_{N_{k'}})\leq   m_l2^{m_lt+1}\cdot2^{-m_l(t+\epsilon_0)} = m_l2^{-m_l\epsilon_0+1},$$
	contradicting that $\mu(\widetilde{C}_{N_{k'}})>0$. If otherwise, $m_l$ lies in the  lower-count range of $ \widetilde{C}_{N_{k}}$  for infinitely many $l$ which also leads to a contradiction.

		 To show that $\pkd E=s$, first consider a partition $E=\bigcup_i E_i$ with each $E_i$  closed. Since $E$ is compact, Baire's category theorem implies that there exists an index $i$ such that $E_i$ has non-empty relative interior, that is there exists an open set $V$ such that $E\cap V\subset  E_i$.
		There are infinitely many  designated intervals $\widetilde{C}_{N_k}$ contained in $V$ and by the construction of $E$,  each $n_{k+1}$th-level interval in $\mathcal{C}_{n_{k+1}}\cap \widetilde{C}_{N_k}$  intersects $E$. Then
		\begin{align*}
			N_{2^{-n_{k+1}}}(E\cap V)\geq&\  \#(\mathcal{C}_{n_{k+1}}\cap \widetilde{C}_{N_k})\\
			=&\ 2^{n_{k+1}-N_k}\\
			>&\ 2^{n_{k+1}s-1}
		\end{align*} 
by \eqref{nka}.	Hence, $\ubd E_i\geq s$ and so  $\pkd E\geq s$ by \eqref{mpddef}.
		
		By \eqref{m1}-\eqref{less}, for any designated interval $\widetilde{C}_{N_k}$ and $m>N_k$,
	\begin{equation*}
		N_{2^{-m}}(E\cap\widetilde{C}_{N_k})=\#\big(\mathcal{C}_m\cap \widetilde{C}_{N_k}\big)\leq m 2^{ms},
	\end{equation*}
	implying that $\ubd (E\cap\widetilde{C}_{N_k})\leq s$.
	Then
	\begin{equation*}
		\pkd  E=\sup_k \pkd (E\cap\widetilde{C}_{N_k})\leq\sup_k \ubd(E\cap\widetilde{C}_{N_k})\leq s,
	\end{equation*}	
	so	$\pkd  E=s$.

	Let $E_0\subseteq\R$ be any compact set with $\hdd E_0 = \pkd  E_0 =t$. By \eqref{ineqs}
	$t\leq \hdd E_0 \leq  \ucod E_0\leq \pkd  E_0\leq t $.
	 Taking $E$ to be $E\cup E_0$ with $E$ as above in the statement of the proposition gives a set with the desired dimensions.
	\end{proof}

The following proposition combines the previous two results.

\begin{prop}
	Given numbers $0<a<b<c<1$, there exists a compact set $E$ such that $\mlbd E  = a$, $\ucod E=b$ and $\pkd E=c$.
\end{prop}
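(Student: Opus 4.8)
The plan is to realise $E$ as a disjoint union $E=E_1\cup E_2$ of two compact sets lying at positive distance, one supplied by Proposition \ref{example1} and one by Proposition \ref{propeq}, and to read off the three dimensions from the individual pieces. This works provided each of $\mlbd$, $\ucod$ and $\pkd$ is \emph{finitely stable}, that is, the dimension of a union of two positively separated compact sets equals the maximum of the two dimensions. For $\pkd$ this is immediate from the countable stability of packing dimension, and for $\mlbd$ it is the finite stability already invoked in the proof of Theorem \ref{MB}; the only statement needing a short argument is finite stability of $\ucod$, which I expect to be the main obstacle since upper correlation dimension is otherwise poorly behaved.

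To establish $\ucod(E_1\cup E_2)=\max\{\ucod E_1,\ucod E_2\}$ when $\mathrm{dist}(E_1,E_2)>0$, the inequality ``$\geq$'' is clear because $\mathcal{P}(E_i)\subset\mathcal{P}(E_1\cup E_2)$. For ``$\leq$'', I would take any $\mu\in\mathcal{P}(E_1\cup E_2)$ and write $\mu=\lambda\mu_1+(1-\lambda)\mu_2$ with $\mu_i\in\mathcal{P}(E_i)$. For $r<\mathrm{dist}(E_1,E_2)$ the cross terms vanish, so $\int\mu(B(x,r))\mathrm{d}\mu(x)=\lambda^2 I_1(r)+(1-\lambda)^2 I_2(r)$ where $I_i(r)=\int\mu_i(B(x,r))\mathrm{d}\mu_i(x)$. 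When $0<\lambda<1$, bounding the sum below by $\lambda^2 I_1(r)$, taking logarithms and dividing by $\log r<0$ gives $\ucod\mu\leq\ucod\mu_1$, and symmetrically $\ucod\mu\leq\ucod\mu_2$; when $\lambda\in\{0,1\}$ the measure is carried by a single piece. Hence $\ucod\mu\leq\max\{\ucod E_1,\ucod E_2\}$ in every case, and taking the supremum over $\mu$ proves the claim. The key point here is that a mixed measure on the two separated pieces can never beat the larger piece, which is exactly what the decomposition delivers once the pieces are separated.

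With finite stability available I would choose the pieces as follows. Let $E_1$ be the set of Proposition \ref{example1} with parameters $(t,s)=(a,b)$, so that $\mlbd E_1=\lbd E_1=a$ and $\ucod E_1=\pkd E_1=b$. Let $E_2$ be the set of Proposition \ref{propeq} with parameters $(t,s)=(a,c)$, so that $\ucod E_2=a$ and $\pkd E_2=c$; the inequality $\mlbd E_2\leq\ucod E_2$ from \eqref{ineqs} then forces $\mlbd E_2\leq a$ without any further computation. The hypotheses $0<a<b<1$ and $0<a<c<1$ required by the two propositions both follow from $0<a<b<c<1$.

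Finally, translating $E_2$ so that $\mathrm{dist}(E_1,E_2)>0$ changes none of the three dimensions, and setting $E=E_1\cup E_2$ the three finite stability statements give $\mlbd E=\max\{a,\mlbd E_2\}=a$ (both terms are $\leq a$ and $E_1$ attains $a$), $\ucod E=\max\{b,a\}=b$, and $\pkd E=\max\{b,c\}=c$, as required. Thus the whole argument reduces to the single new ingredient of finite stability of $\ucod$, with everything else used as a black box together with the chain $\mlbd\leq\ucod\leq\pkd$ of \eqref{ineqs}.
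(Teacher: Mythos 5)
Your proposal is correct and takes essentially the same route as the paper: the paper also forms $E=E_1\cup E_2$ with $E_1$ from Proposition \ref{example1} (parameters $a<b$) and $E_2$ from Proposition \ref{propeq} (parameters $a<c$), and reads off the dimensions by finite stability. The only difference is that you explicitly verify finite stability of $\ucod$ for separated pieces (correctly), whereas the paper leaves this implicit.
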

\begin{proof}
By Proposition \ref{example1} we can find a compact set $E_1$ such that $\mlbd E_1=a$ and $\ucod E_1=\pkd E_1=b$.
By Proposition \ref{propeq}, there exists a compact set $E_2$ such that $\ucod E_2=a$ and $\pkd E_2=c$. The set $E=E_1\cup E_2$ has the required properties.
\end{proof}

\section{Fourier characterisations of dimensions}\label{FourierSec}
\setcounter{equation}{0}
\setcounter{theo}{0}

Characterisation of a fractal dimension in terms of Fourier transforms goes back at least to Kaufman's \cite{Kau} proof of Marstrand's  projection theorem relating to Hausdorff dimension, see also Mattila's books \cite{Mat, Mat2}. However, other dimensions of a set $E$ may be expressed in terms of the Fourier transform of measures on $E$. We define the Fourier transform of a measure  $\mu \in {\mathcal P}(E)$ by 
$$\widehat {\mu}(z) = \int \mathrm{e}^{\mathrm{i}x\cdot z} \mathrm{d}\mu(x)\quad (z\in \mathbb{R}^d).$$
Proposition \ref{fourier1}  relates the behaviour of $\int \mu(B(x,r))d\mu(x)$ of a measure $\mu$ for small $r$ with the mean square of its Fourier transform $\widehat {\mu}$  over large balls. The underlying idea is that $1_{B(0,r)} (x)\approx \exp(-\frac{|x|^2}{2r^2})$ for $x\in \mathbb{R}^d$, and that the  Fourier transform of the Gaussian $\exp(-\frac{|x|^2}{2r^2})$ is $(2\pi)^{d/2} r^d \exp(-\frac{|z|^2 r^2}{2})$ with both of these Gaussians strictly positive. The uniformity of the constants for $\mu \in \mathcal{P}(B(0,\rho))$ is important for applications. 

\begin{prop}\label{fourier1}
Let $0<\epsilon<1$ and $\rho>0$. There are constants $b_1,b_2>0$ and $0<r_0<1 $, depending only on $d, \epsilon$ and $\rho$,    such that for all probability measures $\mu$ on $\mathbb{R}^d$ with support in $B(0,\rho)$ and  all $r \leq r_0$,
\begin{equation}\label{double}
b_1 r^{d(1+\e)} \int_{|z| \leq r^{-1}} |{\widehat {\mu}(z)}|^2 \mathrm{d}z \leq\int \mu(B(x,r))\mathrm{d}\mu(x)
\leq b_2 r^{d(1-\e)} \int_{|z| \leq r^{-1}} |{\widehat {\mu}(z)}|^2 \mathrm{d}z.
\end{equation}
\end{prop}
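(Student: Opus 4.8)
The plan is to route everything through Gaussians, whose Fourier transforms are again strictly positive Gaussians, so that the oscillatory (Bessel-type) Fourier transform of a sharp ball never appears. Writing $g_\sigma(u)=e^{-|u|^2/2\sigma^2}$ and recalling that $\widehat{g_\sigma}(z)=(2\pi)^{d/2}\sigma^{-d}e^{-\sigma^2|z|^2/2}$, Fubini gives the exact identity
\[
\iint e^{-|x-y|^2/2\sigma^2}\,\mathrm{d}\mu(x)\,\mathrm{d}\mu(y)=(2\pi)^{-d/2}\sigma^{d}\int_{\R^d}e^{-\sigma^2|z|^2/2}\,|\widehat{\mu}(z)|^2\,\mathrm{d}z,
\]
valid for every $\sigma>0$ and every probability measure $\mu$. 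Since $\int\mu(B(x,r))\mathrm{d}\mu(x)=\iint 1_{B(0,r)}(x-y)\,\mathrm{d}\mu(x)\,\mathrm{d}\mu(y)$, the two inequalities of \eqref{double} will follow by comparing the sharp cutoffs at scales $r$ (space) and $r^{-1}$ (frequency) with Gaussians of scale $\sigma=r^{1-\epsilon}$ for the upper bound and $\sigma=r^{1+\epsilon}$ for the lower bound; the factor $(2\pi)^{\mp d/2}\sigma^{\pm d}=r^{\pm d(1\mp\epsilon)}$ is exactly what produces the exponents $d(1\mp\epsilon)$.

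Before the estimates I would record two a priori uniform lower bounds, which are the source of the uniformity of the constants. First, as $\spt\mu\subset B(0,\rho)$ we have $|\widehat{\mu}(z)-1|\le\int|e^{ix\cdot z}-1|\mathrm{d}\mu(x)\le\rho|z|$, so $|\widehat{\mu}(z)|^2\ge\tfrac14$ for $|z|\le 1/(2\rho)$; hence, once $r\le 2\rho$,
\[
\int_{|z|\le r^{-1}}|\widehat{\mu}(z)|^2\,\mathrm{d}z\ \ge\ \tfrac14\,\mathcal{L}^d\big(B(0,1/(2\rho))\big)=:c_0>0.
\]
Second, Lemma \ref{boxcount} together with $N_r(\spt\mu)\le N_r(B(0,\rho))\le C r^{-d}$ gives a constant $c>0$ with $\int\mu(B(x,r))\mathrm{d}\mu(x)\ge c\,r^{d}$. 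Both left-hand sides are thus at least polynomially large in $r$, which will let me discard tails that turn out to be super-polynomially small.

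For the upper bound, take $\sigma=r^{1-\epsilon}>r$. On $B(0,r)$ one has $e^{-|u|^2/2\sigma^2}\ge e^{-r^2/2\sigma^2}=e^{-r^{2\epsilon}/2}$, so $1_{B(0,r)}\le e^{r^{2\epsilon}/2}g_\sigma$, and the identity yields $\int\mu(B(x,r))\mathrm{d}\mu(x)\le e^{1/2}(2\pi)^{-d/2}r^{d(1-\epsilon)}\int e^{-\sigma^2|z|^2/2}|\widehat{\mu}|^2\mathrm{d}z$. Splitting the frequency integral at $|z|=r^{-1}$, the low part is at most $\int_{|z|\le r^{-1}}|\widehat{\mu}|^2$, while the tail obeys $\int_{|z|>r^{-1}}e^{-\sigma^2|z|^2/2}\mathrm{d}z=\sigma^{-d}\int_{|w|>\sigma/r}e^{-|w|^2/2}\mathrm{d}w$; since $\sigma/r=r^{-\epsilon}\to\infty$ this is super-polynomially small, hence $\le c_0\le\int_{|z|\le r^{-1}}|\widehat{\mu}|^2$ for small $r$. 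This gives the right-hand inequality of \eqref{double} with $b_2=2e^{1/2}(2\pi)^{-d/2}$.

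For the lower bound, run the comparison backwards with $\sigma=r^{1+\epsilon}<r$, using $|\widehat{\mu}|^2\ge0$ throughout. For $|z|\le r^{-1}$ one has $1\le e^{r^{2\epsilon}/2}e^{-\sigma^2|z|^2/2}$, so $\int_{|z|\le r^{-1}}|\widehat{\mu}|^2\le e^{1/2}(2\pi)^{d/2}\sigma^{-d}\iint e^{-|x-y|^2/2\sigma^2}\mathrm{d}\mu(x)\mathrm{d}\mu(y)$ by the identity. I then split the spatial integral at $|x-y|=r$: the inner part is at most $\int\mu(B(x,r))\mathrm{d}\mu(x)$, and the dyadic tail $\sum_{j\ge0}e^{-4^{j}r^{2}/2\sigma^{2}}$ (bounding each shell's $\mu\times\mu$-mass by $1$) is controlled by $\sum_{j\ge0}e^{-4^{j}r^{-2\epsilon}/2}$, again super-polynomially small because $r/\sigma=r^{-\epsilon}\to\infty$, hence $\le c\,r^{d}\le\int\mu(B(x,r))\mathrm{d}\mu(x)$ for small $r$. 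Rearranging, with $\sigma^{-d}=r^{-d(1+\epsilon)}$, gives the left-hand inequality with $b_1=(2e^{1/2}(2\pi)^{d/2})^{-1}$. The routine ingredients are the Gaussian identity and the pointwise Gaussian/indicator comparisons; the one genuinely delicate point is the tail control subject to uniform constants, namely matching the super-polynomial smallness of the discarded Gaussian tails — forced by the choice $\sigma/r=r^{\mp\epsilon}$ — against the two a priori polynomial lower bounds $c_0$ and $c\,r^{d}$, which is what turns the $r^{d}$ heuristic into the two-sided estimate with the $r^{\pm d\epsilon}$ slack.
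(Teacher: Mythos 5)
Your argument is correct and follows essentially the same route as the paper: sandwich the indicator of a ball between Gaussians at scales $r^{1\mp\epsilon}$, pass to the frequency side by Plancherel, and absorb the super-polynomially small Gaussian tails using uniform a priori lower bounds coming from $\widehat{\mu}(0)=1$ and the Lipschitz bound $|\widehat{\mu}(z)-1|\leq \rho|z|$. The only (harmless) deviations are a typo in your quoted formula for $\widehat{g_\sigma}$ (the factor should be $\sigma^{d}$, not $\sigma^{-d}$; the identity you actually use is correct) and your use of $\int\mu(B(x,r))\,\mathrm{d}\mu(x)\geq c\,r^{d}$ from Lemma~\ref{boxcount} to absorb the spatial tail in the lower bound, where the paper's (sketched) argument would instead reuse the frequency-side constant $c_0$.
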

\begin{proof}

We note  that $1_{B(0,r^{1-\e})}(x)\leq \mathrm{e}^{1/2} \exp(-\frac{|x|^2}{2r^{2(1-\e)}})$ for all $x\in \mathbb{R}^d$. Then for $0<r<1$,
\begin{align}
\int &\mu(B(x,r))\mathrm{d}\mu(x)
 \leq \int \mu(B(x,r^{1-\e}))\mathrm{d}\mu(x)\nonumber \\
 &=\int\!\!\int  1_{B(0,r^{1-\e})} (x-y)\mathrm{d}\mu(x)\mathrm{d}\mu(y)\nonumber \\
 &\leq \mathrm{e}^{1/2} \int\!\!\int  \exp\!\Big(-\frac{|x-y|^2}{2r^{2(1-\e)}}\Big) \mathrm{d}\mu(x)d\mu(y)\nonumber \\
&=\mathrm{e}^{1/2} \int \bigg(\exp\!\Big(-\frac{|\cdot|^2}{2r^{2(1-\e)}}\Big)*\mu\bigg)(y)\mathrm{d}\mu(y)\nonumber \\
&=\frac{\mathrm{e}^{1/2}}{(2\pi)^d}  \int \reallywidehat{\bigg(\exp\!\Big(\displaystyle{ -\frac{|\cdot|^2}{2r^{2(1-\e)}}}\Big)*\mu\bigg)}(z)\overline{\widehat {\mu}(z)}\mathrm{d}z\quad\text{(Plancheral)}\nonumber \\
&=\frac{\mathrm{e}^{1/2}}{(2\pi)^{d}}  \int (2\pi)^{d/2} r^{d(1-\e)} \exp\!\Big(-\frac{|z|^2 r^{2(1-\e)}}{2}\Big){\widehat {\mu}(z)}\overline{\widehat {\mu}(z)} \mathrm{d}z \quad \text{(Parseval + convolution)}\nonumber \\
&=c_1 r^{d(1-\e)}  \int \exp\!\Big(-\frac{|z|^2 r^{2(1-\e)}}{2}\Big)|{\widehat {\mu}(z)}|^2 \mathrm{d}z \qquad \nonumber \\
&\leq c_1 r^{d(1-\e)}  \bigg[\int_{|z| \leq r^{-1}} |{\widehat {\mu}(z)}|^2 \mathrm{d}z +
\int_{|z| > r^{-1}} \exp\!\Big(-\frac{|z|^2 r^{2(1-\e)}}{2}\Big)\mathrm{d}z\bigg]\nonumber \\
&\leq c_1 r^{d(1-\e)} \bigg[\int_{|z| \leq r^{-1}} |{\widehat {\mu}(z)}|^2 \mathrm{d}z + 
c_2\bigg]\label{boundint} \\
&\leq c_3 r^{d(1-\e)} \int_{|z| \leq r^{-1}} |{\widehat {\mu}(z)}|^2 \mathrm{d}z. \label{nearzero}
\end{align}
For \eqref{boundint} we note that the second integral above is bounded in $r$. For \eqref{nearzero}  we note that ${\widehat {\mu}(0)}=1$ and 
$|\nabla  \widehat {\mu}(z)| \leq \sqrt{d}\int_{|x|\leq\rho} |x|\mathrm{d}\mu(x)\leq \sqrt{d}\rho$.
Then $|{\widehat {\mu}(z)}|\geq \frac{1}{2}$  
 if $|z| \leq \frac{1}{2}(\sqrt{d}\rho)^{-1}=:r_0^{-1}$, 
  so 
$$\int_{|z| \leq r^{-1}} |{\widehat {\mu}(z)}|^2 \mathrm{d}z \geq \int_{|z| \leq r_0^{-1} }|{\widehat {\mu}(z)}|^2 \mathrm{d}z \geq  \textstyle{\frac{1}{4}}v_d r_0^{-d}$$ 
if $r\leq r_0$, where $v_d$ is the volume of the $d$-dimensional unit ball.

The left hand inequality of  \eqref{double} follows in a similar way, working with  $\exp(-\frac{|x-y|^2}{2r^{2(1+\e)}})$ and its Fourier transform.
\end{proof}

This proposition allows us to read off Fourier characterisations of various dimensions discussed in Section \ref{sec2}.
\begin{cor}\label{fdimcor}
Let $E \subset \mathbb{R}^d$ be a bounded Borel set. Then
\begin{align*}
\lbd E &=\ \liminf_{R \rightarrow \infty}\frac{\log \Big(R^{-d} \inf_{\mu \in \mathcal{P}(E)} \int_{|z| \leq R} |{\widehat {\mu}(z)}|^2 \mathrm{d}z\Big)}{- \log R}
 \\
\mbox{and} \quad 
\ubd E &=\ \limsup_{R \rightarrow \infty}\frac{\log \Big(R^{-d} \inf_{\mu \in \mathcal{P}(E)} \int_{|z| \leq R} |{\widehat {\mu}(z)}|^2 \mathrm{d}z\Big)}{- \log R}
\end{align*}
\end{cor}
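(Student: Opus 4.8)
The plan is to combine Proposition~\ref{boxprop}, which expresses $\lbd E$ and $\ubd E$ through the small-$r$ behaviour of $\inf_{\mu\in\mathcal{P}(E)}\int\mu(B(x,r))\mathrm{d}\mu(x)$, with the two-sided estimate \eqref{double} of Proposition~\ref{fourier1}, read on the Fourier side with $R=r^{-1}$. Since $E$ is bounded we fix $\rho$ with $E\subset B(0,\rho)$, so every $\mu\in\mathcal{P}(E)$ has support in $B(0,\rho)$ and Proposition~\ref{fourier1} applies with constants $b_1,b_2,r_0$ that depend only on $d,\e,\rho$ and, crucially, are uniform in $\mu$. Write $\Phi(r)=\inf_{\mu}\int\mu(B(x,r))\mathrm{d}\mu(x)$ and $\Psi(R)=\inf_{\mu}\int_{|z|\le R}|\widehat{\mu}(z)|^2\mathrm{d}z$, both infima over $\mathcal{P}(E)$.

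First I would pass the infimum over $\mu$ through the bound \eqref{double}. For each $\mu$ the factors $b_1 r^{d(1+\e)}$ and $b_2 r^{d(1-\e)}$ are positive constants independent of $\mu$, so taking $\inf_\mu$ of each term of $b_1 r^{d(1+\e)}\int_{|z|\le r^{-1}}|\widehat{\mu}|^2\le\int\mu(B(x,r))\mathrm{d}\mu(x)\le b_2 r^{d(1-\e)}\int_{|z|\le r^{-1}}|\widehat{\mu}|^2$ and using monotonicity of the infimum gives, for all $r\le r_0$,
$$b_1 r^{d(1+\e)}\Psi(r^{-1})\ \le\ \Phi(r)\ \le\ b_2 r^{d(1-\e)}\Psi(r^{-1}).$$
This step is exactly where the uniformity of the constants in Proposition~\ref{fourier1} is essential: without it the infimum could not be interchanged with the Fourier integral.

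Next I would take logarithms and divide by $\log r<0$, tracking signs carefully. Putting $R=r^{-1}$, so $\log r=-\log R$, and writing $K(R)=R^{-d}\Psi(R)$ for the quantity appearing in the statement, the factor $R^{-d}$ absorbs the discrepancy between $d(1\pm\e)$ and $d$ (since $r^{d(1\pm\e)}\Psi(r^{-1})=r^{\pm d\e}K(R)$), and the displayed bound rearranges to
$$\frac{\log b_2}{\log r}-d\e+\frac{\log K(R)}{-\log R}\ \le\ \frac{\log\Phi(r)}{\log r}\ \le\ \frac{\log b_1}{\log r}+d\e+\frac{\log K(R)}{-\log R}.$$
Since $\log b_1/\log r$ and $\log b_2/\log r$ tend to $0$ as $r\to0$, taking $\liminf_{r\to0}$ of the middle term, which equals $\lbd E$ by Proposition~\ref{boxprop}, and using that adding a quantity converging to a constant shifts the $\liminf$ by that constant, shows that $\liminf_{R\to\infty}\frac{\log K(R)}{-\log R}$ differs from $\lbd E$ by at most $d\e$.

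Finally, since $0<\e<1$ is arbitrary while the Fourier-side $\liminf$ does not depend on $\e$, letting $\e\to0$ forces equality, giving the first identity. The second is obtained verbatim with $\limsup_{r\to0}$ and $\limsup_{R\to\infty}$ in place of the $\liminf$'s, yielding $\ubd E$. I do not anticipate a genuine obstacle here: the substance is entirely contained in Propositions~\ref{boxprop} and~\ref{fourier1}, and the only points needing care are the uniformity of the constants (which licenses commuting the infimum with the bound) and the bookkeeping of signs together with the $R^{-d}$ normalisation when passing to logarithms.
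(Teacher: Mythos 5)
Your proposal is correct and follows exactly the paper's own (one-sentence) proof: combine Proposition \ref{boxprop} with the uniform two-sided bound \eqref{double} of Proposition \ref{fourier1}, pass the infimum over $\mu\in\mathcal{P}(E)$ through that bound, set $R=r^{-1}$, and let $\epsilon\to0$. The extra bookkeeping you supply (the sign flips on dividing by $\log r$ and the $R^{-d}$ normalisation absorbing the $d(1\pm\epsilon)$ exponents) is accurate and fills in precisely the details the paper leaves implicit.
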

\begin{proof}
These identities follow from Proposition \ref{boxprop} and taking arbitrarily small values of $\epsilon$  and letting $r^{-1}=R \to \infty$ in Proposition \ref{fourier1}, using the uniformity of the inequalities \eqref{double} in $\mu\in\mathcal{P}(E)$.
\end{proof}

Similar results for box dimensions were obtained in \cite{Fr} by a different method.  Another form of Fourier expression for box dimensions is developed in \cite{Fa}.
\begin{cor}
	Let $E \subset \mathbb{R}^d$ be a compact set. Then
	
	\begin{equation*}
		\mlbd E= \sup_{F\subset E}\inf_{\{R_k\}\nearrow\infty}\sup_{\mu:\spt\mu=F}\limsup_{k\to\infty}  \frac{\log \Big(R_k^{-d}   \int_{|z| \leq R_k} |{\widehat {\mu}(z)}|^2 \mathrm{d}z\Big)}{- \log R_k}.  
	\end{equation*}
\end{cor}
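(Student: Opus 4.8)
The plan is to read the Fourier expression directly off the integral characterisation \eqref{MBL} of $\mlbd E$ by feeding it through the two-sided estimate of Proposition \ref{fourier1}, under the substitution $R_k=r_k^{-1}$. Since $E$ is compact I would first fix $\rho$ with $E\subset B(0,\rho)$, so that every $\mu$ with $\spt\mu=F\subset E$ is supported in $B(0,\rho)$ and Proposition \ref{fourier1} is available with constants $b_1,b_2,r_0$ depending only on $d,\e,\rho$.

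The heart of the argument is a pointwise identity, established for each fixed $F\subset E$, each sequence $\{r_k\}\searrow 0$ (equivalently $\{R_k\}\nearrow\infty$ with $R_k=r_k^{-1}$), and each $\mu$ with $\spt\mu=F$. Writing $\int_{|z|\le R_k}|\widehat{\mu}(z)|^2\,\mathrm{d}z=r_k^{-d}J_k$ with $J_k:=R_k^{-d}\int_{|z|\le R_k}|\widehat{\mu}(z)|^2\,\mathrm{d}z$, the inequalities \eqref{double} collapse to
\begin{equation*}
b_1 r_k^{d\e}J_k\ \le\ \int\mu(B(x,r_k))\,\mathrm{d}\mu(x)\ \le\ b_2 r_k^{-d\e}J_k
\end{equation*}
for all large $k$ (those with $r_k\le r_0$). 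Taking logarithms and dividing by $\log r_k<0$ — which reverses the inequalities — the factors $r_k^{\pm d\e}$ contribute additive terms of absolute value $d\e$, while $\log b_i/\log r_k\to 0$. Passing to $\limsup_{k\to\infty}$ therefore shows that
\begin{equation*}
\limsup_{k\to\infty}\frac{\log\int\mu(B(x,r_k))\,\mathrm{d}\mu(x)}{\log r_k}\quad\text{and}\quad\limsup_{k\to\infty}\frac{\log\big(R_k^{-d}\int_{|z|\le R_k}|\widehat{\mu}(z)|^2\,\mathrm{d}z\big)}{-\log R_k}
\end{equation*}
differ by at most $d\e$; since neither quantity involves $\e$, letting $\e\to 0$ forces them to be equal.

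With this pointwise equality in hand the remaining step is purely formal: the map $\{r_k\}\mapsto\{r_k^{-1}\}$ is a bijection between sequences decreasing to $0$ and sequences increasing to $\infty$, and the Fourier quotient depends only on $\{R_k\}$, so applying $\sup_{\mu:\spt\mu=F}$, then the infimum over sequences, then $\sup_{F\subset E}$ to both sides turns \eqref{MBL} into exactly the stated formula. I expect the only real pitfalls to be bookkeeping ones: tracking the sign of $\log r_k<0$ when dividing, and confirming that the additive perturbation of size $d\e$ together with the vanishing term $\log b_i/\log r_k$ alters the $\limsup$ by at most $d\e$ (immediate, since a uniformly bounded perturbation changes a $\limsup$ by at most its bound). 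A pleasant contrast with Corollary \ref{fdimcor} is that the uniformity of $b_1,b_2,r_0$ in $\mu$ plays no role here, because the limit $\e\to 0$ is taken for each fixed $\mu$ before any supremum over $\mu$ is formed.
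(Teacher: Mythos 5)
Your proof is correct and follows exactly the paper's route: the paper's own argument is the one-line observation that Proposition \ref{fourier1} applied to \eqref{MBL} with $R=r^{-1}$ and $\epsilon\to 0$ yields the formula, and your write-up simply fills in the (correct) bookkeeping of logarithms, signs, and the $d\epsilon$ perturbation of the $\limsup$. Your closing remark that uniformity of the constants in $\mu$ is not needed here, unlike in Corollary \ref{fdimcor}, is also accurate.
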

\begin{proof}
We apply Proposition \ref{fourier1} to \eqref{MBL} taking arbitrarily small $\epsilon$ and letting $r^{-1}=R \to \infty$.
\end{proof}

There are simple Fourier expressions for the correlation dimensions of measures.
\begin{cor}\label{corcdmes}
For $\mu$ a Borel probability measure with bounded support on $\mathbb{R}^d$,
\begin{align}
\lcod\mu& = \liminf_{R \rightarrow \infty}\frac{\log \Big(R^{-d}  \int_{|z| \leq R} |{\widehat {\mu}(z)}|^2 \mathrm{d}z\Big)}{- \log R}\quad \label{intdef7} \\
\mbox{ and }\quad 
\displaystyle{\ucod } \mu & = \limsup_{R \rightarrow \infty}\frac{\log \Big(R^{-d} \int_{|z| \leq R} |{\widehat {\mu}(z)}|^2 \mathrm{d}z\Big)}{- \log R}.\label{intdef6} 
\end{align}
\end{cor}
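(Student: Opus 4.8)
The plan is to read both identities directly off Proposition \ref{fourier1}, since by the definitions \eqref{lcdm} and \eqref{ucdm} the quantities $\lcod\mu$ and $\ucod\mu$ are exactly the lower and upper limits as $r\to 0$ of $\log\big(\int\mu(B(x,r))\,\mathrm{d}\mu(x)\big)/\log r$. Because $\mu$ has bounded support, I would first fix $\rho>0$ with $\spt\mu\subseteq B(0,\rho)$, so that for every $\e\in(0,1)$ the two-sided bound \eqref{double} holds for this $\mu$ and all $r\le r_0(\e)$. Abbreviating $I(r)=\int_{|z|\le r^{-1}}|\widehat{\mu}(z)|^2\,\mathrm{d}z$, the whole argument is then a matter of taking logarithms and dividing by $\log r$.

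The key manipulation is the substitution $R=r^{-1}$, so that $\log r=-\log R$ and $r\to 0$ corresponds to $R\to\infty$. Under this substitution the prefactor $r^{d(1\pm\e)}$ becomes $R^{-d(1\pm\e)}$, and pairing the $R^{-d}$ with $I(r)$ produces exactly the normalised Fourier quantity $R^{-d}\int_{|z|\le R}|\widehat{\mu}(z)|^2\,\mathrm{d}z$ of the statement, leaving the residual exponent $\pm d\e$ and the constants $b_1,b_2$ as error terms. Since $\log r<0$ for small $r$, taking logarithms in \eqref{double} and dividing by $\log r$ reverses the inequalities and yields, for all small $r$,
\begin{equation*}
\frac{\log\big(R^{-d}I(r)\big)}{-\log R}-d\e+\frac{\log b_2}{-\log R}
\ \le\ \frac{\log\int\mu(B(x,r))\,\mathrm{d}\mu(x)}{\log r}
\ \le\ \frac{\log\big(R^{-d}I(r)\big)}{-\log R}+d\e+\frac{\log b_1}{-\log R}.
\end{equation*}
As $r\to 0$ (equivalently $R\to\infty$) the terms $\log b_i/(-\log R)$ vanish, so passing to $\liminf$ and to $\limsup$ along this chain squeezes the lower, respectively upper, limit of the middle expression to within $d\e$ of the corresponding limit of $\log\big(R^{-d}I(r)\big)/(-\log R)$. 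Letting $\e\to 0$ then closes the gap and delivers \eqref{intdef7} and \eqref{intdef6} at once.

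I do not expect a substantial obstacle: all of the analytic content already resides in Proposition \ref{fourier1}, and the remainder is bookkeeping. The only points requiring care are the reversal of the inequalities when dividing by the negative quantity $\log r$, and tracking the $\e$-dependent term $d\e$ so that it can finally be sent to zero. It is worth stressing that, in contrast with Corollary \ref{fdimcor} and the box-dimension statements, the uniformity of the constants $b_1,b_2,r_0$ over the class $\mathcal{P}(E)$ is not needed here, since $\mu$ is a single fixed measure and only \eqref{double} for that one $\mu$ is invoked.
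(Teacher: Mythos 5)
Your proposal is correct and follows exactly the route the paper takes: the paper's proof of this corollary is simply ``these expressions again follow by Proposition \ref{fourier1}, applied to \eqref{lcdm} and \eqref{ucdm},'' and your write-up is a careful expansion of that one-line argument (substitution $R=r^{-1}$, logarithms, the sign reversal on dividing by $\log r$, and letting $\e\to0$). Your closing observation that uniformity of the constants over $\mathcal{P}(E)$ is not needed for a single fixed measure is also accurate.
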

\begin{proof}
These expressions again follow by Proposition \ref{fourier1}, applied to \eqref{lcdm} and \eqref{ucdm}. 
\end{proof}

We remark that  when $d=1$, a similar formula for $\lcod\mu$ was also derived in \cite{FNW}.
Taking the supremum of these correlation dimensions over measures supported by a set $E$  leads to Fourier expressions for the correlation dimension of $E$. Together with Proposition \ref{propeq} disproves a conjecture of Fraser \cite{Fr} that the expression in \eqref{intdef6} gives packing dimension.
\begin{cor}
Let $E \subset \mathbb{R}^d$ be a bounded Borel set. Then
\begin{equation*}
\ucod E= \inf\Big\{s \geq 0: \forall  \mu \in {\mathcal P}(E), \text{ for sufficiently large }R,  \int_{|z| \leq R} |{\widehat {\mu}(z)}|^2 \mathrm{d}z \geq R^{d-s } \Big\}. 
\end{equation*}
\end{cor}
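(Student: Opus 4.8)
The plan is to combine the Fourier expression for the upper correlation dimension of a \emph{measure} from Corollary \ref{corcdmes} with the definition $\ucod E=\sup_{\mu\in\mathcal P(E)}\ucod\mu$ in \eqref{ucds}, reducing the statement to a real-variable manipulation of the $\limsup$ in \eqref{intdef6}. Writing $I_\mu(R)=\int_{|z|\le R}|\widehat\mu(z)|^2\,\mathrm dz$, for large $R$ the quotient inside the $\limsup$ of \eqref{intdef6} equals $d-\log I_\mu(R)/\log R$, so
\[
\ucod\mu \;=\; d-\liminf_{R\to\infty}\frac{\log I_\mu(R)}{\log R},
\]
and taking the supremum over $\mu$ gives $\ucod E=d-\inf_{\mu}\liminf_{R\to\infty}\log I_\mu(R)/\log R$. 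Let $S$ denote the set appearing on the right-hand side of the corollary.

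I would first show $\inf S\ge\ucod E$ by contraposition. If $s<\ucod E$, then some $\mu\in\mathcal P(E)$ has $\ucod\mu>s$, that is $\liminf_{R\to\infty}\log I_\mu(R)/\log R<d-s$; hence $I_\mu(R)<R^{d-s}$ for arbitrarily large $R$, so the defining condition of $S$ fails for this $\mu$ and $s\notin S$. For the reverse inequality, fix $\epsilon>0$ and set $s=\ucod E+\epsilon$. For every $\mu\in\mathcal P(E)$ we have $\ucod\mu\le\ucod E<s$, equivalently $\liminf_{R\to\infty}\log I_\mu(R)/\log R\ge d-\ucod E>d-s$; since this $\liminf$ strictly exceeds $d-s$, it follows that $I_\mu(R)\ge R^{d-s}$ for all sufficiently large $R$, so $s\in S$. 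Letting $\epsilon\to0$ yields $\inf S\le\ucod E$, and the two inequalities give the identity.

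The only delicate point is the strict-versus-nonstrict inequality at the boundary value $s=\ucod E$: the set $S$ is defined by the pointwise-in-$R$ bound ``$I_\mu(R)\ge R^{d-s}$ for all large $R$'', which is why one must pass through the values $s=\ucod E+\epsilon$ rather than attempting to include $s=\ucod E$ itself. This is exactly the mechanism that turns the characterisation into an infimum, paralleling the passage from \eqref{ucdm} to \eqref{cd1} in Proposition \ref{cordim}. I expect no substantive obstacle beyond bookkeeping this boundary behaviour. Alternatively, the corollary can be obtained by applying the uniform two-sided bound \eqref{double} of Proposition \ref{fourier1} directly to the integral characterisation \eqref{cd1} under the substitution $R=r^{-1}$; here the uniformity of the constants $b_1,b_2$ over $\mu\in\mathcal P(E)$ guarantees that the factors $r^{\pm d\epsilon}$ perturb the exponent only by $O(\epsilon)$ and wash out as $\epsilon\to0$, reproducing the same identity.
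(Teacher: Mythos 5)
Your argument is correct and follows the same route as the paper, which simply combines the definition $\ucod E=\sup_{\mu\in\mathcal P(E)}\ucod\mu$ from \eqref{ucds} with the Fourier expression \eqref{intdef6} for $\ucod\mu$; you have merely written out the routine sup--liminf bookkeeping (including the correct handling of the boundary value via $s=\ucod E+\epsilon$) that the paper leaves implicit. No gaps.
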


\begin{proof}
This combines the definition of upper correlation dimension of $E$ with \eqref{intdef6}.
 \end{proof}

For completeness, we include the Fourier result for the lower correlation dimension, that is Hausdorff dimension.
\begin{cor}\label{haudim}
Let $E \subset \mathbb{R}^d$ be a Borel set. Then
\begin{align}
&\lcod E=\hdd E \nonumber\\
&\quad=\sup\Big\{0\leq s \leq d: \exists\  \mu \in {\mathcal P}(E), \text{ s.t. for sufficiently large }R, \int_{|z| \leq R} |{\widehat {\mu}(z)}|^2 \mathrm{d}z \leq R^{d-s} \Big\}\label{hauf1}\\
&\quad=\sup\Big\{0\leq s \leq d:  \exists\  \mu \in {\mathcal P}(E) \text{ s.t. } 
\int |z|^{s-d}|{\widehat {\mu}(z)}|^2 \mathrm{d}z <\infty\Big\}.\label{hauf2}
\end{align}
\end{cor}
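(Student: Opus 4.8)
The identity $\lcod E = \hdd E$ has already been established in \eqref{haucor}, so the plan is to show that both Fourier expressions \eqref{hauf1} and \eqref{hauf2} equal this common value. The natural starting point is the measure-level formula \eqref{intdef7} of Corollary \ref{corcdmes}, namely $\lcod\mu = \liminf_{R\to\infty}\log\big(R^{-d}\int_{|z|\le R}|\widehat{\mu}(z)|^2\,\mathrm{d}z\big)/(-\log R)$, combined with the set-level definition $\lcod E = \sup_{\mu\in\mathcal{P}(E)}\lcod\mu$ from \eqref{lcds}.

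For \eqref{hauf1}, I would convert the $\liminf$ in \eqref{intdef7} into the pointwise-in-$R$ bound appearing in the supremum. Writing $I_\mu(R) = \int_{|z|\le R}|\widehat{\mu}(z)|^2\,\mathrm{d}z$, the inequality $\lcod\mu > s$ means precisely that $\log(R^{-d}I_\mu(R))/(-\log R) > s$ for all large $R$; since $-\log R < 0$, this rearranges (taking care of the sign) to $I_\mu(R) \le R^{d-s}$ for all sufficiently large $R$. Thus any $s < \lcod\mu$ is admissible for the set in \eqref{hauf1}, so the supremum in \eqref{hauf1} is at least $\lcod E$. Conversely, if some $\mu\in\mathcal{P}(E)$ satisfies $I_\mu(R)\le R^{d-s}$ for all large $R$, then $\log(R^{-d}I_\mu(R))/(-\log R) \ge s$ eventually, so $\lcod\mu \ge s$ and hence $\lcod E \ge s$; taking the supremum over admissible $s$ yields the reverse inequality. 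The bound $s \le d$ is automatic since $\hdd E \le d$.

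For \eqref{hauf2}, I would show directly that, for $0<s<d$, the weighted integral $\int |z|^{s-d}|\widehat{\mu}(z)|^2\,\mathrm{d}z$ is finite if and only if the truncated integrals $I_\mu(R)$ grow more slowly than $R^{d-s'}$ for every $s'<s$, which is exactly the condition in \eqref{hauf1}. The device is a dyadic annular decomposition: split $\int_{|z|>1}|z|^{s-d}|\widehat{\mu}|^2\,\mathrm{d}z = \sum_{k\ge0}\int_{2^k<|z|\le 2^{k+1}}|z|^{s-d}|\widehat{\mu}|^2\,\mathrm{d}z$, bound $|z|^{s-d}\asymp 2^{k(s-d)}$ on each annulus, and compare $\int_{2^k<|z|\le 2^{k+1}}|\widehat{\mu}|^2$ against the truncated integrals $I_\mu(2^{k+1})$. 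The contribution from $|z|\le 1$ is harmless because $|\widehat{\mu}|\le 1$ and $s>0$ keeps $|z|^{s-d}$ locally integrable near the origin. Summing a geometric series gives the equivalence, with the usual loss of an arbitrarily small exponent, so the supremum in \eqref{hauf2} coincides with that in \eqref{hauf1}. Alternatively, one may invoke the classical Riesz-energy identity $\mathcal{E}_s(\mu) = \iint |x-y|^{-s}\,\mathrm{d}\mu(x)\,\mathrm{d}\mu(y) = c(d,s)\int |z|^{s-d}|\widehat{\mu}(z)|^2\,\mathrm{d}z$ with $c(d,s)>0$ for $0<s<d$, and quote the energy characterisation of Hausdorff dimension from Section \ref{lcdhd}, which identifies \eqref{hauf2} with $\hdd E$ immediately.

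The main obstacle is the honest bookkeeping in this last equivalence: matching the growth rate of the truncated integrals with the convergence of the weighted integral is where care is needed, both with the sign when dividing by $-\log R$ and with the boundary behaviour of $|z|^{s-d}$ as $|z|\to 0$ and $|z|\to\infty$. Everything else is a direct reading-off from Corollary \ref{corcdmes} and the definitions.
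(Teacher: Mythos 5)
Your proposal is correct and follows essentially the same route as the paper: \eqref{hauf1} is read off from \eqref{lcdm}, \eqref{haucor} and \eqref{intdef7} exactly as you describe, and your ``alternative'' for \eqref{hauf2} --- the Riesz-energy identity $\mathcal{E}_s(\mu)=c(d,s)\int|z|^{s-d}|\widehat{\mu}(z)|^2\,\mathrm{d}z$ together with the energy characterisation from Section \ref{lcdhd} --- is precisely the paper's argument, which simply cites Mattila for this transformation of \eqref{energy}. Your primary dyadic-annulus decomposition for \eqref{hauf2} is a sound, more self-contained variant that links \eqref{hauf2} directly to \eqref{hauf1} at the cost of an arbitrarily small exponent, but it does not change the substance of the proof.
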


\begin{proof}
Identity \eqref{hauf1} follows from  \eqref{lcdm}, \eqref{haucor} and \eqref{intdef7}, whilst \eqref{hauf2} is the familiar Fourier characterisation of Hausdorff dimension obtained by transforming \eqref{energy}, see \cite{Mat, Mat2}.
\end{proof}

\section*{Acknowledgments}
The authors thank Amlan Banaji for many useful discussions and for pointing out the reference \cite{FNW} and also thank the referee for very helpful comments. SZ has been supported by the New Cornerstone Science Foundation through the New Cornerstone Investigator Program.

\bibliographystyle{plain}

\bigskip
\end{document}